\documentclass[a4paper,11pt,oneside]{article}
\usepackage[latin1]{inputenc}
\usepackage{amssymb, amsmath}
\usepackage{moreverb}
\usepackage{vmargin}
\usepackage{fancyhdr}
\usepackage{amsfonts}
\usepackage{fancybox}
\usepackage{amsthm}
\usepackage{amsmath}

\newtheorem{theorem}{Theorem}[section]

\newtheorem{lemma}[theorem]{Lemma}

\newtheorem{definition}[theorem]{Definition}

\usepackage[
  linktocpage,
  urlbordercolor={1 0.5 0.125},
  filebordercolor={1 0.5 0.125},
  linkbordercolor={0.25 1 0.25},
  citebordercolor={0.25 1 0.25},
  pdfborderstyle={/S/U/W 1}, 
  breaklinks]{hyperref}  
  \begin{document}
  \begin{center}
\begin{Large}
Global existence of solutions to a parabolic-elliptic chemotaxis system with critical degenerate diffusion \\
\end{Large}
\vspace{0.5cm}
Elissar Nasreddine\\
\vspace{0.2cm}
 \begin{small}
\textit{Institut de Math\'ematiques de Toulouse, Universit\'e de Toulouse,}\\
 \textit{F--31062 Toulouse cedex 9, France}\\
 \vspace{0.1cm}
 e-mail: elissar.nasreddine@math.univ-toulouse.fr\\
 \today
\end{small}
\end{center}

\textbf{Abstract} This paper is devoted to the analysis of non-negative solutions for a degenerate parabolic-elliptic Patlak-Keller-Segel system with critical nonlinear diffusion in a bounded domain with homogeneous Neumann boundary conditions. Our aim is to prove the existence of a global weak solution under a smallness condition on the mass of the initial data, there by completing previous results on finite blow-up for large masses. Under some higher regularity condition on solutions, the uniqueness of solutions is proved by using a classical duality technique.
\vspace{0.5cm}
\\
\textit{Keywords}: Chemotaxis; Keller-Segel model; Parabolic equation; Elliptic equation; Global existence; Uniqueness.
\section{Introduction}
Chemotaxis is the movement of biological organisms oriented towards the gradient of some substance, called the chemoattractant. The Patlak-Keller-Segel (PKS) model (see \cite{initiation}, \cite{onexplo} and \cite{random}) has been introduced in order to explain chemotaxis cell aggregation by means of a coupled system of two equations: a drift-diffusion type equation for the cell density $u$, and a reaction diffusion equation for the chemoattractant concentration $\varphi$. It reads
\begin{equation}
\label{PKS}
(PKS)\left\{
\begin{array}{rclr}
\partial_t u&=&\mathrm{div}(\nabla u^m-u\cdot\nabla \varphi)& x\in \Omega, t>0,\\
-\Delta \varphi & =&u-<u> &x \in \Omega,  t>0,\\
<\varphi(t)>&=&0 & t>0,\\
\partial_\nu u=\partial_\nu \varphi&=&0 & x\in \partial \Omega, t>0,\\
u(0,x)&=&u_0(x)& x\in \Omega,
\end{array}
\right.
\end{equation}
where $\Omega  \subset \mathbb{R}^N$ is an open bounded domain, $\nu$ the outward unit normal vector to the boundary $\partial \Omega$ and $m\geq 1$. An important parameter in this model is the total mass $M$ of cells, which is formally conserved through the evolution:
 \begin{equation}
 M=<u>=\frac{1}{|\Omega|}\int _\Omega u(t, x)\ dx
 =\frac{1}{|\Omega|}\int _\Omega u_0(x)\ dx.
 \end{equation}
 Several studies have revealed that the dynamics of \eqref{PKS} depend sensitively on the parameters $N$, $m$ and $M$. More precisely, if $N=2$ and $m=1$, it is well-known that the solutions of \eqref{PKS} may blow up in finite time if $M$ is sufficiently large (see \cite{random, blow}) while solutions are global in time for $M$ sufficiently small \cite{random}, see also the survey articles \cite{onthepar, from}. \\
 
The situation is very different when $m=1$ and $N\ne 2$. In fact, if $N=1$, there is global existence of solutions of \eqref{PKS} whatever the value of the mass of initial data $M$, see \cite{finite} and the references therein. If $N\ge 3$, for all $M>0$, there are initial data $u_0$ with mass $M$ for which the corresponding solutions of \eqref{PKS} explode in finite time (see \cite{blow}). Thus, in dimension $N\ge 3$ and $m=1$, the threshold phenomenon does not take place as in dimension $2$, but we expect the same phenomenon when $N\ge 3$ and $m$ is equal to the \textit{critical} value $m=m_c=\frac{2(N-1)}{N}$. More precisely, we consider a more general version of \eqref{PKS} where the first equation of \eqref{PKS} is replaced by
 $$\partial_t u=\mathrm{div}(\phi(u)\ \nabla u-u\ \nabla \varphi), \quad t>0, \quad x\in\Omega,$$
and the diffusitivity $\phi$ is a positive function in $C^1([0,\infty[)$ which does not grow to fast at infinity. In \cite{finite}, the authors proved that there is a critical exponent such that, if the diffusion has a faster growth than the one given by this exponent, solutions to \eqref{PKS} (with $\phi(u)$ instead of $m u^{m-1}$) exist globally and are uniformly bounded, see also \cite{volume, preventing} for $N=2$. More precisely, the main results in \cite{finite} read as follows:
 \begin{itemize}
 \item If $\phi(u)\geq c(1+u)^p$ for all $u\geq 0$ and some $c>0$ and $p> 1-\frac{2}{N}$ then all solutions of \eqref {PKS} are global and bounded. 
 \item If  $\phi(u)\leq c(1+u)^p$ for all $u\geq 0$ and some $c>0$ and $p< 1-\frac{2}{N}$ then there exist initial data $u_0$ such that 
 $$\lim_{t \to T}||u(.,t)||_{\infty}=\infty, \ \mathrm{for\ some\ finite\ T>0}.$$
 \end{itemize}
Except for $N=2$, the critical case $m=\frac{2(N-1)}{N}$ is not covered by the analysis of \cite{finite}. Recently, Cie\'slak and Lauren\c{c}ot in \cite{finitetim} show that if $\phi(u)\leq c(1+u)^{1-\frac{2}{N}}$ and $N\ge 3$, there are solutions of \eqref{PKS} blowing up in finite time when $M$ exceeds an explicit threshold. In order to prove that, when $N \geq 3$ and $m =\frac{2\ (N - 1)}{N}$, we have a threshold phenomenon similar to dimension $N = 2$ with $m = 1$, it remains to show that solutions of \eqref{PKS} are global when $M$ is small enough. The goal of this paper is to show that this is indeed true, see Theorem \ref{Ex} below.\\

By combining Theorem \ref{Ex} with the blow-up result obtained in \cite{finitetim}, we conclude that, for $N\ge 3$ and $m=\frac{2(N-1)}{N}$, there exists $0<M_1\le M_2<\infty$ such that the solutions of \eqref{PKS} are global if the mass $M$ of the initial data $u_0$  is in $[0,M_1)$, and may explode in finite time if $M>M_2$. An important open question is whether $M_1=M_2$ when $\Omega$ is a ball in $\mathbb{R}^N$ and $u_0$ is a radially symmetric function. Notice that, in the radial case, this result is true when $ N = 2 $ and $ m = 1$, and the threshold value of the mass for blow-up is $ M_1 = M_2 = 8 \pi $, see \cite{volume, blow, blowup, pde}. Again, for $N=2$ and $m=1$, but for regular, connected and bounded domain, it has been shown that $M_1=4\pi=\frac{M_2}{2}$ (see \cite{blowup,blow} and the references therein). Such a result does not seem to be
known for $N\ge 3$ and $m=\frac{2(N-1)}{N}$.\\

 Still, in the whole space $\Omega=\mathbb{R}^N$ when the equation for $\varphi$ in \eqref{PKS} is replaced by the Poisson equation $\varphi = E_N*u$, with $E_N$ being the Poisson kernel, it has been shown in \cite{optimal, critical, local, global, time, globalexis} that:
 \begin{itemize} 
 \item When $N\ge 3$ and $1\le m< 2-\frac{2}{N}$, this modified version of \eqref{PKS} has a global weak solution if $M=\|u_0\|_1$ is sufficiently small, while finite time blow-up occurs for some initial data with sufficiently large mass.
\item When $N\ge 2$ and $m>2-\frac{2}{N}$, this modified version of \eqref{PKS} has a global weak solution whatever the value of $M$.
\item When $N\ge 2$ and $m=2-\frac{2}{N}$, there is a threshold mass $M_c>0$ such that solutions to this modified version of \eqref{PKS} exist globally if $M=\|u_0\|_1\le M_c$, and might blow up in finite time if $M>M_c$.
 \end{itemize}

From now on, we assume that
$$
N\ge 3 \quad\mbox{ and }\quad m=\frac{2(N-1)}{N} .
$$
\section{Main Theorem}
Throughout this paper , we deal with weak solutions of \eqref{PKS}. Our definition of weak solutions now reads:
\begin{definition}\label{def}
Let $T\in (0;\infty]$. A pair $(u, \varphi)$ of functions $u: \Omega\times [0,T)\longrightarrow [0,\infty)$, $\varphi: \Omega \times [0,T)\longrightarrow \mathbb{R}$ is called a weak solution of (\ref{PKS}) in $\Omega \times [0,T)$ if 
\begin{itemize}
\item $u\in L^\infty((0,T); L^\infty(\Omega))$; $\ u^m \in L^2((0,T); H^1(\Omega))$ and $<u>=M$.
\item $\varphi \in L^2((0,T); H^1(\Omega))$ and $<\varphi>=0$.

\item $(u,\varphi)$ satisfies the equation in the sense of distributions ; i.e,
$$-\int _0^T\int_\Omega \left(\nabla u^m\cdot \nabla \psi- u\nabla \varphi\cdot \nabla\psi-u\ \partial_t\psi\right)\ \mathrm{d}x\mathrm{d}t=\int_\Omega u_0(x)\ \psi(0, x)\ \mathrm{d}x,$$
$$\int ^T_0\int_\Omega \nabla\varphi\cdot \nabla \psi\ dxdt=\int_0^T\int_\Omega (u-M)\ \psi \ \mathrm{d}x\mathrm{d}t,
$$
\end{itemize}
for any continuously differentiable function $\psi \in C^1([0,T]\times \overline{\Omega})$ with $\psi(T)=0$ and $T>0$.

\end{definition}
For $\varphi\in H^1(\Omega)$ satisfying $<\varphi>=0$, we denote by $C_s$ the Sobolev constant where
\begin{equation}\label{sob}||\nabla \varphi||_2\geq C_s ||\varphi||_{2^*},\ \ \mathrm{where} \ 2^*=\frac{2N}{N-2}.\end{equation}
The main theorem gives the existence and uniqueness of a time global weak solution to \eqref{PKS} which corresponds to a degenerate version of the ``Nagai model" for the semi-linear Keller-Segel system, when $u_0\in L^\infty(\Omega)$ and the initial data is assumed to be small.
\begin{theorem}\label{Ex}
Define
\begin{equation}\label{mass}M_*:=\left(\frac{2\ C_s^{2}}{(m-1)\ |\Omega|^{\frac{2}{N}}}\right)^{\frac{N}{2}},\end{equation} 
where $C_s$ is the Sobolev constant in \eqref{sob}.\\
Assume that $u_0$ is nonnegative function in $L^\infty(\Omega)$, which satisfies
 \begin{equation}\label{cond}||u_0||_{1}<M_*.\end{equation} 
Then the equation (\ref{PKS}) has a global weak solution $(u,\varphi)$ in the sense of Definition \ref{def}. Moreover, if we assume that 
\begin{equation} \label{assu} \varphi\in L^\infty((0, T); W^{2, \infty}(\Omega))\end{equation} for all $T>0$
then this solution is unique.
\end{theorem}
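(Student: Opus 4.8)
The plan is to construct the global weak solution by a regularization--compactness scheme and to establish uniqueness separately by a dual-norm (duality) argument; the smallness condition \eqref{cond} enters only through the single a priori estimate that keeps the energy coercive. The natural object is the free energy
$$\mathcal{F}[u]=\frac{1}{m-1}\int_\Omega u^m\,\mathrm{d}x-\frac12\int_\Omega u\,\varphi\,\mathrm{d}x,$$
which for the parabolic-elliptic coupling $-\Delta\varphi=u-M$ is a Lyapunov functional: along solutions $\frac{\mathrm d}{\mathrm dt}\mathcal{F}[u]=-\int_\Omega u\,|\nabla(\tfrac{m}{m-1}u^{m-1}-\varphi)|^2\,\mathrm dx\le 0$. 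Since testing $-\Delta\varphi=u-M$ with $\varphi$ gives $\int_\Omega u\varphi=\|\nabla\varphi\|_2^2$, we have $\mathcal{F}[u]=\frac{1}{m-1}\|u\|_m^m-\frac12\|\nabla\varphi\|_2^2$.

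First I would regularize, replacing the degenerate diffusion by the uniformly parabolic flux $(\varepsilon+m u_\varepsilon^{m-1})\nabla u_\varepsilon$ and smoothing the data, so that standard parabolic theory yields global nonnegative classical solutions $(u_\varepsilon,\varphi_\varepsilon)$ with conserved mass, on which all computations are licit. The crucial $\varepsilon$-uniform bound comes from combining $\mathcal{F}[u_\varepsilon(t)]\le\mathcal{F}[u_{0\varepsilon}]$ with the Sobolev inequality \eqref{sob} and H\"older interpolation between $L^1$ and $L^m$: writing $(2^*)'=\tfrac{2N}{N+2}$ one gets $\|\nabla\varphi\|_2\le C_s^{-1}\|u\|_{(2^*)'}$ and $\|u\|_{(2^*)'}\le\|u\|_1^{1-\theta}\|u\|_m^{\theta}$ where, precisely because $m=\tfrac{2(N-1)}{N}$, the exponents satisfy $2\theta=m$ and $2(1-\theta)=\tfrac{2}{N}$. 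Hence $\|\nabla\varphi\|_2^2\le C_s^{-2}\|u_0\|_1^{2/N}\|u\|_m^m$, and the energy inequality becomes
$$\Big(\frac{1}{m-1}-\frac{\|u_0\|_1^{2/N}}{2C_s^2}\Big)\|u\|_m^m\le\mathcal{F}[u_{0\varepsilon}],$$
whose coefficient is nonnegative exactly under the smallness assumption \eqref{cond} with the threshold \eqref{mass}; this gives a uniform-in-time bound on $\|u_\varepsilon\|_m$. \textbf{The main obstacle} is to upgrade this to the uniform $L^\infty$ bound required by Definition \ref{def}: I would run an Alikakos--Moser iteration, testing the equation with $u_\varepsilon^{p-1}$ and controlling the destabilizing term $(p-1)\int u_\varepsilon^{p+1}$ --- produced by the drift after using $-\Delta\varphi_\varepsilon=u_\varepsilon-M$ --- by the diffusion $\|\nabla u_\varepsilon^{(m+p-1)/2}\|_2^2$ via Gagliardo--Nirenberg. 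At the critical exponent this absorption is borderline and closes only because the mass is small, and one must track that the resulting constants stay summable as $p\to\infty$. Testing with $u_\varepsilon^m$ and using this $L^\infty$ bound then yields $u_\varepsilon^m$ bounded in $L^2((0,T);H^1(\Omega))$.

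With these $\varepsilon$-uniform bounds in hand, the flux $\nabla u_\varepsilon^m-u_\varepsilon\nabla\varphi_\varepsilon$ is bounded in $L^2((0,T);L^2(\Omega))$, so $\partial_t u_\varepsilon$ is bounded in $L^2((0,T);(H^1(\Omega))')$; together with the $L^2(H^1)$ bound on $u_\varepsilon^m$, an Aubin--Lions--Simon type argument gives strong convergence of $u_\varepsilon^m$, hence of $u_\varepsilon$, in $L^2((0,T);L^2(\Omega))$ and a.e. Elliptic regularity for $-\Delta\varphi_\varepsilon=u_\varepsilon-M$ transfers this to $\nabla\varphi_\varepsilon$. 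These convergences, with the uniform $L^\infty$ bound, let me pass to the limit in the weak formulation (the $\varepsilon\nabla u_\varepsilon$ term vanishing), producing a weak solution in the sense of Definition \ref{def}.

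For uniqueness under \eqref{assu}, let $(u_1,\varphi_1),(u_2,\varphi_2)$ share the same data, set $w=u_1-u_2$ (so $\langle w\rangle=0$) and $\zeta=\varphi_1-\varphi_2$, which solves $-\Delta\zeta=w$ with zero Neumann data and mean. The duality is carried by the $H^{-1}$ norm $\|\nabla\zeta\|_2^2=\langle w,\zeta\rangle$, for which $\tfrac12\tfrac{\mathrm d}{\mathrm dt}\|\nabla\zeta\|_2^2=\langle\partial_t w,\zeta\rangle$. Testing the difference of the equations with $\zeta$, the diffusion contributes $-\int_\Omega(u_1^m-u_2^m)(u_1-u_2)\,\mathrm dx\le 0$ by monotonicity of $s\mapsto s^m$, while the drift splits as $\int_\Omega w\,\nabla\varphi_1\cdot\nabla\zeta+\int_\Omega u_2|\nabla\zeta|^2$. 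The second piece is $\le\|u_2\|_\infty\|\nabla\zeta\|_2^2$; for the first I would substitute $w=-\Delta\zeta$ and integrate by parts, obtaining $\tfrac12\int_\Omega(u_1-M)|\nabla\zeta|^2+\int_\Omega\nabla\zeta\cdot(D^2\varphi_1)\nabla\zeta$. \emph{This is exactly where \eqref{assu} is used}: the Hessian term is bounded by $\|D^2\varphi_1\|_\infty\|\nabla\zeta\|_2^2$. Altogether $\tfrac{\mathrm d}{\mathrm dt}\|\nabla\zeta\|_2^2\le C\|\nabla\zeta\|_2^2$ with $C$ controlled by the $L^\infty$ bounds on $u_1,u_2$ and the $W^{2,\infty}$ bound on $\varphi_1$; since $\|\nabla\zeta(0)\|_2=0$, Gr\"onwall forces $\zeta\equiv 0$ and hence $u_1\equiv u_2$.
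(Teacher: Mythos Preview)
Your overall strategy --- regularize, use the free energy to obtain a uniform $L^m$ bound under \eqref{cond}, bootstrap to $L^\infty$, extract a limit via Aubin--Lions, and prove uniqueness by an $H^{-1}$ duality argument --- is precisely the paper's plan, with only cosmetic variations (your $\varepsilon\Delta u$ regularization versus the paper's $(u+\delta)^m$; your $\partial_t u\in L^2((H^1)')$ versus the paper's $\partial_t u^m\in L^1((W^{1,N+1})')$). Your uniqueness proof is the same as the paper's, up to the harmless symmetric choice of splitting the drift as $w\,\nabla\varphi_1+u_2\,\nabla\zeta$ rather than $u_1\,\nabla\zeta+w\,\nabla\varphi_2$.

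There is, however, a real gap in your $L^\infty$ step. You write that absorbing $(p-1)\int u^{p+1}$ into $\|\nabla u^{(m+p-1)/2}\|_2^2$ is ``borderline and closes only because the mass is small''. That route does not close. If you interpolate $\int u^{p+1}$ against $\|u\|_1$ via Gagliardo--Nirenberg, the criticality of $m=\tfrac{2(N-1)}{N}$ forces the exponent on the $H^1$ term to be exactly $2$, so that
\[
\int u^{p+1}\;\le\; C\,\|u\|_1^{2/N}\,\bigl\|u^{(m+p-1)/2}\bigr\|_{H^1}^2
\]
with $C$ independent of $p$; but the diffusion supplies only $\tfrac{4m(p-1)}{(m+p-1)^2}\,\|\nabla u^{(m+p-1)/2}\|_2^2$, whose coefficient is $O(1/p)$. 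The absorption condition therefore reads $\|u_0\|_1^{2/N}\lesssim 1/p$, which no fixed positive mass can satisfy for all $p$, and the constants you hope to ``track'' are not summable. The smallness \eqref{cond} is used \emph{once}, in the energy inequality yielding the $L^m$ bound; the passage $L^m\to L^\infty$ does not and cannot rely on it. The paper handles this in two stages: it first tests with $(u-K)_+^{\,p-1}$ and uses the $L^m$ bound through $\|(u-K)_+\|_1\le C_0^m/K^{m-1}$, choosing $K$ large to restore the absorption for every $p$; this gives $L^p$ bounds for all $p$, hence $\|\nabla\varphi\|_\infty<\infty$ by elliptic regularity, after which a standard Moser iteration (with the drift controlled by $\|\nabla\varphi\|_\infty$ rather than by $-\Delta\varphi=u-M$) reaches $L^\infty$. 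An equally valid repair of your argument is to interpolate $\int u^{p+1}$ against $\|u\|_m$ instead of $\|u\|_1$: since $m>\tfrac{2N}{N+2}$ for $N\ge 3$, the Gagliardo--Nirenberg exponent on $H^1$ then drops strictly below $2$, and Young's inequality absorbs the bad term with no smallness assumption.
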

In order to prove the previous theorem, we introduce the following approximated equations
\begin{equation*}
(KS)_\delta \left\{
\begin{array}{rclr}
\partial_t u_\delta&=&\mathrm{div}\left(\nabla (u_\delta+\delta)^m-u_\delta \nabla \varphi_\delta\right)& x\in \Omega, t>0,\\
-\Delta \varphi_\delta & =&u_\delta-<u_\delta>&x \in \Omega, t>0,\\
\partial_\nu u_\delta =\partial_\nu \varphi_\delta&=&0 & x\in \partial \Omega, t>0,\\
u_\delta(0, x)&=&u_0(x)& x \in \Omega,
\end{array}
\right.
\end{equation*}
where $\delta \in (0,1)$, and we show that under a smallness condition on the mass of initial data, the Liapunov function 
$$L_\delta(u,\varphi)=\int_\Omega (b_\delta(u)+\frac{1}{2}|\nabla\varphi_\delta|^2-u_\delta\ \varphi_\delta)\ \mathrm{d}x,$$
yields the $L^m$ bound of $u_\delta(t)$ independent of $\delta$. Then using Gagliardo-Nirenberg and Poincar\'e inequalities, we obtain for $p>m$, the $L^p$ bound for $u_\delta(t)$ independent of $\delta$. As a consequence of Sobolev embedding theorem, we improve the regularity of $\varphi_\delta$. And thus, under the same assumptions on the initial data, Moser's iteration technique yields the uniform bound of $u_\delta$. Then, thanks to the local well-posedness result \cite[Theorem 3.1]{finite} we obtain the existence of a  global solution of $(KS)_\delta$. The existence of solutions stated in Theorem \ref{Ex} is then proved using a compactness method; for that purpose we show an additional estimate on $\partial_t u^m_\delta$ which, together with the already derived estimates, guarantees the compactness in space and time of the family $(u_\delta)_{\delta\in (0,1)}$. Finally, in the presence of nonlinear diffusion and under some additional regularity assumption on $\varphi_\delta$, we prove the uniqueness using a classical duality technique.

 \section{Approximated Equations}
 The first equation of (\ref{PKS}) is a quasilinear parabolic equation of degenerate type. Therefore, we cannot expect the system (\ref{PKS}) to have a classical solution at the point where $u$ vanishes. In order to prove Theorem \ref{Ex}, we use a compactness method and introduce the following approximated equations of (KS):
 \begin{equation}
\label{KSd}
(KS)_\delta \left\{
\begin{array}{rclr}
\partial_t u_\delta&=&\mathrm{div}\left(\nabla (u_\delta+\delta)^m-u_\delta \nabla \varphi_\delta\right)& x\in \Omega, t>0,\\
-\Delta \varphi_\delta & =&u_\delta-<u_\delta>&x \in \Omega, t>0,\\
\partial_\nu u_\delta =\partial_\nu \varphi_\delta&=&0 & x\in \partial \Omega, t>0,\\
u_\delta(0, x)&=&u_0(x)& x \in \Omega,
\end{array}
\right.
\end{equation}
where $\delta\in (0,1)$.\\
The main purpose of this section is to construct the time global strong solution of \eqref{KSd}.
\subsection{Existence of global strong solution of $(KS)_\delta$}
\begin{theorem}\label{ex}
For $\delta\in (0,1)$ and $T>0$, we consider an initial condition $u_{0}\in L^\infty(\Omega)$, $u_0\geq0$ and such that $||u_0||_1<M_*$ where $M_*$ is defined in \eqref{mass}. Then $(KS)_\delta$ has a global strong solution $(u_\delta, \varphi_\delta)$ which is bounded in $L^\infty((0,T)\times \Omega)$ for all $T>0$ uniformly with respect to $\delta \in (0,1)$.
\end{theorem}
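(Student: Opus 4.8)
The plan is to begin with the local well-posedness result \cite[Theorem 3.1]{finite}. Since for $\delta>0$ the diffusion $(u_\delta+\delta)^m$ is uniformly non-degenerate (its $u$-derivative $m(u_\delta+\delta)^{m-1}\ge m\,\delta^{m-1}>0$), the system $(KS)_\delta$ is genuinely parabolic-elliptic and admits a unique local strong solution $(u_\delta,\varphi_\delta)$ on a maximal interval $[0,T_{\max})$. A comparison argument gives $u_\delta\ge 0$, and integrating the first equation shows that the mass $\langle u_\delta(t)\rangle=\langle u_0\rangle=:M$ is conserved, so $\|u_\delta(t)\|_1=\|u_0\|_1$ for all $t$. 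The whole task then reduces to producing an a priori bound on $\|u_\delta(t)\|_\infty$ that is uniform both in $t\in[0,T]$ and in $\delta\in(0,1)$, since such a bound rules out finite-time blow-up and forces $T_{\max}=\infty$ through the continuation criterion.

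The crucial step is the energy estimate. Differentiating the Liapunov functional $L_\delta$ along the flow and exploiting the gradient-flow structure of $(KS)_\delta$, I would show $\tfrac{\mathrm{d}}{\mathrm{d}t}L_\delta\le 0$, whence $L_\delta(t)\le L_\delta(u_0)$ with the right-hand side bounded uniformly in $\delta$ because $u_0\in L^\infty(\Omega)$. To turn this into an $L^m$ bound I would test the Poisson equation against $\varphi_\delta$ to get $\|\nabla\varphi_\delta\|_2^2=\int_\Omega(u_\delta-M)\varphi_\delta\,\mathrm{d}x=\int_\Omega u_\delta\varphi_\delta\,\mathrm{d}x$, then combine the Sobolev inequality \eqref{sob} with the Hölder interpolation $\|u_\delta\|_{(2^*)'}\le\|u_\delta\|_1^{1/N}\|u_\delta\|_m^{(N-1)/N}$, where $(2^*)'=\tfrac{2N}{N+2}$ is the conjugate of $2^*$ and the exponent $(N-1)/N$ is exactly tuned to the critical value $m=\tfrac{2(N-1)}{N}$, so that $2\tfrac{N-1}{N}=m$ and $\int_\Omega|\nabla\varphi_\delta|^2\,\mathrm{d}x\le C_s^{-2}\|u_0\|_1^{2/N}\int_\Omega u_\delta^m\,\mathrm{d}x$. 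Since $b_\delta(u)\ge\tfrac{1}{m-1}u^m-(\text{lower order})$, the bound $L_\delta(t)\le L_\delta(u_0)$ yields
$$\Big(\tfrac{1}{m-1}-c\,\|u_0\|_1^{2/N}\Big)\int_\Omega u_\delta^m\,\mathrm{d}x\le L_\delta(u_0)+C,$$
and here is the main obstacle: the prefactor is positive precisely when $\|u_0\|_1<M_*$, which is exactly why the smallness assumption \eqref{cond} is unavoidable in this critical regime. This produces the $\delta$-independent $L^m$ bound on $u_\delta(t)$.

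Once $\|u_\delta(t)\|_m$ is controlled, I would bootstrap to higher integrability. For $p>m$ I would test the first equation of $(KS)_\delta$ with $u_\delta^{p-1}$; the diffusion term produces a coercive contribution of the form $\int_\Omega|\nabla u_\delta^{(p+m-1)/2}|^2\,\mathrm{d}x$, while the drift term $\int_\Omega u_\delta\,\nabla\varphi_\delta\cdot\nabla u_\delta^{p-1}\,\mathrm{d}x$ is absorbed after estimating $\nabla\varphi_\delta$ through elliptic regularity in terms of $\|u_\delta\|_q$ and applying the Gagliardo--Nirenberg and Poincaré inequalities. Carrying this out with constants independent of $\delta$ gives uniform $L^p(\Omega)$ bounds for every finite $p$, which in turn upgrade the regularity of $\varphi_\delta$ via the Sobolev embedding theorem.

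Finally I would feed these $L^p$ estimates into a Moser iteration: letting $p\to\infty$ in the resulting family of energy inequalities and verifying that every constant is uniform in $\delta$, I obtain $\|u_\delta(t)\|_\infty\le C(T)$ on $[0,T]$ with $C(T)$ independent of $\delta\in(0,1)$. This uniform $L^\infty$ bound defeats the blow-up continuation criterion, so $T_{\max}=\infty$ and $(u_\delta,\varphi_\delta)$ is the desired global strong solution. I expect the genuinely hard point to be the energy estimate of the second paragraph, where the criticality of $m=\tfrac{2(N-1)}{N}$ forces the Sobolev and interpolation exponents to match exactly and the sign of the leading coefficient hinges on the sharp mass threshold $M_*$; the subsequent bootstrap and Moser iteration are delicate chiefly in that all constants must be tracked so as to remain free of $\delta$.
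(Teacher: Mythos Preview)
Your proposal is correct and follows essentially the same strategy as the paper: local well-posedness from \cite{finite}, the Liapunov functional combined with Sobolev and the critical interpolation $\|u\|_{(2^*)'}\le\|u\|_1^{1/N}\|u\|_m^{(N-1)/N}$ to obtain the $\delta$-independent $L^m$ bound under the mass constraint $\|u_0\|_1<M_*$, then an $L^p$ bootstrap via Gagliardo--Nirenberg/Poincar\'e, and finally Moser iteration to reach $L^\infty$ and defeat the continuation criterion. The only technical detail worth flagging is that in the $L^p$ step the paper tests with $(u_\delta-K)_+^{p-1}$ rather than $u_\delta^{p-1}$, choosing $K$ large so that $\|(u_\delta-K)_+\|_1\le C_0^m/K^{m-1}$ is small enough to absorb the drift term; you should expect to need this truncation (or an equivalent smallness device) when you carry out the absorption you describe.
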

The starting point of the proof of Theorem \ref{ex} is the following local well-posedness result \cite[Theorem 1.3]{finite}: 
\begin{lemma}
Let the same assumptions as that in Theorem \ref{ex} hold.
There exists a maximal existence time $T^\delta_{max}\in (0,\infty]$ and a unique solution $(u_\delta, \varphi_\delta)$ of $(KS)_\delta$ in $ [0,T^\delta_{max})\times \Omega$. Moreover, $$\mathrm{if}\ T^\delta_{max}<\infty\  \mathrm{then}\ \lim_{t \to T^\delta_{max}}||u_\delta(t, .)||_{\infty}= \infty.$$
In addition $<u_\delta(t)>=<u_0>=M$ for all $t\in [0, T^\delta_{\mathrm{max}})$.
\end{lemma}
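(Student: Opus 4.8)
The statement concerns the non-degenerate approximate problem $(KS)_\delta$ with a fixed $\delta \in (0,1)$, so the plan is to treat it as a uniformly parabolic quasilinear equation and invoke standard parabolic well-posedness theory. First I would eliminate $\varphi_\delta$ from the system: for each fixed $t$, the elliptic problem $-\Delta \varphi_\delta = u_\delta - \langle u_\delta \rangle$ with homogeneous Neumann data and $\langle \varphi_\delta \rangle = 0$ has a unique solution, defining a bounded linear operator $\mathcal{K} : u_\delta \mapsto \varphi_\delta$. By elliptic regularity for the Neumann Laplacian, $\mathcal{K}$ maps $L^p(\Omega)$ into $W^{2,p}(\Omega)$, so $\nabla \varphi_\delta = \nabla \mathcal{K} u_\delta$ is controlled in $W^{1,p}(\Omega)$ by $\|u_\delta\|_p$. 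Substituting, $(KS)_\delta$ reduces to the single quasilinear, nonlocal parabolic equation $\partial_t u_\delta = \mathrm{div}\bigl( m (u_\delta+\delta)^{m-1} \nabla u_\delta - u_\delta \, \nabla \mathcal{K} u_\delta \bigr)$. The crucial observation is that, since $u_\delta \ge 0$ and $\delta > 0$, the diffusion coefficient $a(s) := m(s+\delta)^{m-1}$ satisfies $a(s) \ge m\delta^{m-1} > 0$ and is bounded on bounded sets; thus the equation is non-degenerate and uniformly parabolic as long as $u_\delta$ stays bounded.

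For local existence and uniqueness I would run a contraction-mapping argument (equivalently, appeal to Amann's abstract theory of quasilinear parabolic problems). Given $w$ in $C([0,\tau]; X)$ for a suitable space $X$ whose embedding controls the drift, set $\varphi = \mathcal{K} w$ and solve the \emph{linear} parabolic problem $\partial_t u = \mathrm{div}(a(w)\nabla u - u\, \nabla \varphi)$ with the Neumann data and initial value $u_0$; linear parabolic estimates (maximal $L^p$-regularity or Schauder) produce a unique solution and hence a solution map $w \mapsto u$. Choosing $\tau$ small and the radius of the ball in $X$ appropriately, this map is a contraction, so its unique fixed point is the local solution $u_\delta$. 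Once a solution of limited regularity is in hand, a bootstrap using parabolic Schauder estimates, the coefficients being smooth functions of the now-continuous $u_\delta$ and of $\nabla \varphi_\delta$, upgrades it to a strong solution, which is then extended up to a maximal time $T^\delta_{\max}$ in the usual way.

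The qualitative conclusions follow from the structure of the equation. Nonnegativity of $u_\delta$ comes from the parabolic comparison principle: checking that $u \equiv 0$ is a subsolution, the hypothesis $u_0 \ge 0$ propagates to $u_\delta(t) \ge 0$ for all $t$. Mass conservation, $\langle u_\delta(t) \rangle = \langle u_0 \rangle = M$, is obtained by integrating the first equation over $\Omega$ and using the homogeneous Neumann boundary conditions to annihilate the flux $m(u_\delta+\delta)^{m-1}\nabla u_\delta - u_\delta \nabla \varphi_\delta$ on $\partial\Omega$. The blow-up alternative is the standard continuation criterion: if $T^\delta_{\max} < \infty$ but $\|u_\delta(t)\|_\infty$ remained bounded on $[0,T^\delta_{\max})$, then the diffusion coefficient would stay uniformly elliptic and bounded and the drift would stay controlled, so the local theory could be restarted from a time close to $T^\delta_{\max}$ to continue the solution past it, contradicting maximality; hence $\|u_\delta(t,\cdot)\|_\infty \to \infty$ as $t \to T^\delta_{\max}$.

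I expect the main difficulty to lie in closing the fixed-point argument because of the double coupling: the diffusion coefficient $a(u_\delta)$ depends on the solution pointwise while the drift $u_\delta \nabla \mathcal{K} u_\delta$ depends on it nonlocally through the elliptic solve. The space $X$ must be chosen so that elliptic regularity for $\mathcal{K}$ delivers enough smoothness of $\nabla \varphi_\delta$ to dominate the transport term, while the smoothing of the linear parabolic part simultaneously controls the quasilinear diffusion; balancing these two requirements is the technical heart of the argument (the $\delta$-dependence is harmless here, since $\delta$ is fixed and the only uniformity in $\delta$ needed elsewhere is the eventual $L^\infty$-bound). Since this is precisely the situation settled by the local well-posedness theorem of \cite{finite}, in practice I would simply cite that result, but the route above is how I would establish it from scratch.
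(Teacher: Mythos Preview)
Your proposal is correct and, in its final line, matches exactly what the paper does: the paper gives no proof of this lemma at all but simply cites it as \cite[Theorem~1.3]{finite} (also referred to as \cite[Theorem~3.1]{finite} in the introduction to the approximated system). Your additional sketch of how that local well-posedness result is established---elimination of $\varphi_\delta$ via the Neumann Green operator, uniform parabolicity of $a(s)=m(s+\delta)^{m-1}$ for fixed $\delta>0$, fixed-point/Amann theory for the resulting quasilinear nonlocal equation, comparison for nonnegativity, integration for mass conservation, and the standard continuation argument for the blow-up alternative---is accurate and is indeed the content of the cited reference, but the paper itself does not reproduce any of it.
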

To prove Theorem \ref{ex} we need to prove some lemmas which control $L^m$ norm, $L^p$ norm and $L^\infty$ norm of the solution $u_\delta$ of \eqref{KSd}.
\subsection{$L^p$ -estimates, $1\leq p\leq\infty$.}
Our goal is to show that if $||u_0||_{1}$ is small enough then all solutions are global in time and uniformly bounded.\\

Let us first prove the $L^m$ bound for $u_\delta$.
\begin{lemma}\label{le}
Let the same assumptions as that in Theorem \ref{ex} hold and $(u_\delta,\varphi_\delta)$ be the nonnegative maximal solution of $(KS)_\delta$. Then, $u_\delta$ satisfies the following estimate $$||u_\delta(t)||_{m}\leq C_0,\ \mathrm{for\ all} \ t\in [0, T^\delta_{\mathrm{max}}) $$ and $||u_\delta(t)||_1=||u_0||_1$ where $C_0$ is a constant independent of $T_{max}^\delta$ and $\delta$.
\end{lemma}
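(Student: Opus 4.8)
The plan is to exploit the Liapunov (free energy) functional $L_\delta$ introduced above together with the Sobolev--Poincar\'e inequality \eqref{sob}, the critical value $m=\frac{2(N-1)}{N}$ being exactly what makes the argument close under the smallness assumption \eqref{cond}.

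\emph{Step 1 (mass conservation and monotonicity of $L_\delta$).} Integrating the first equation of \eqref{KSd} over $\Omega$ and using the homogeneous Neumann conditions gives $\frac{d}{dt}\int_\Omega u_\delta\,dx=0$, whence $\|u_\delta(t)\|_1=\|u_0\|_1$ for all $t\in[0,T^\delta_{\max})$. Next, choosing the entropy density $b_\delta$ so that $s\,b_\delta''(s)=m(s+\delta)^{m-1}$ (so that $\nabla(u_\delta+\delta)^m=u_\delta\nabla b_\delta'(u_\delta)$), one rewrites the first equation as $\partial_t u_\delta=\mathrm{div}\big(u_\delta\nabla(b_\delta'(u_\delta)-\varphi_\delta)\big)$. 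Differentiating $L_\delta$ along the flow, integrating by parts (the boundary terms vanish by the Neumann conditions) and using the elliptic equation for $\varphi_\delta$ yields the dissipation identity
$$\frac{d}{dt}L_\delta(u_\delta,\varphi_\delta)=-\int_\Omega u_\delta\,\big|\nabla(b_\delta'(u_\delta)-\varphi_\delta)\big|^2\,dx\le 0,$$
so that $L_\delta(u_\delta(t),\varphi_\delta(t))\le L_\delta(u_0,\varphi_\delta(0))$ for all $t$.

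\emph{Step 2 (reading off an $L^m$ inequality).} Since $\int_\Omega|\nabla\varphi_\delta|^2\,dx=\int_\Omega(u_\delta-M)\varphi_\delta\,dx$ and $\langle\varphi_\delta\rangle=0$, the functional reduces to $L_\delta=\int_\Omega b_\delta(u_\delta)\,dx-\tfrac12\int_\Omega|\nabla\varphi_\delta|^2\,dx$. Discarding the nonpositive initial Dirichlet term, monotonicity gives
$$\int_\Omega b_\delta(u_\delta(t))\,dx\le \tfrac12\int_\Omega|\nabla\varphi_\delta(t)|^2\,dx+\int_\Omega b_\delta(u_0)\,dx.$$
For the left-hand side I would use $s\,b_\delta''(s)=m(s+\delta)^{m-1}\ge m s^{m-1}$, i.e. $b_\delta''(s)\ge (s^m/(m-1))''$, to obtain $\int_\Omega b_\delta(u_\delta)\,dx\ge \frac{1}{m-1}\|u_\delta\|_m^m-C$ with $C$ uniform in $\delta\in(0,1)$. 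For the Dirichlet term I would test the elliptic equation with $\varphi_\delta$, then apply H\"older and \eqref{sob}: using $\langle\varphi_\delta\rangle=0$, $\|\nabla\varphi_\delta\|_2^2=\int_\Omega u_\delta\varphi_\delta\,dx\le \|u_\delta\|_{(2^*)'}\|\varphi_\delta\|_{2^*}\le C_s^{-1}\|u_\delta\|_{(2^*)'}\|\nabla\varphi_\delta\|_2$, hence $\tfrac12\|\nabla\varphi_\delta\|_2^2\le \tfrac{1}{2C_s^2}\|u_\delta\|_{(2^*)'}^2$, where $(2^*)'=\frac{2N}{N+2}$.

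\emph{Step 3 (the critical interpolation).} Here the exponent $m=\frac{2(N-1)}{N}$ enters decisively. Interpolating the Lebesgue norm between $L^1$ and $L^m$, $\|u_\delta\|_{(2^*)'}\le \|u_\delta\|_1^{1-\theta}\|u_\delta\|_m^{\theta}$ with $\frac{1}{(2^*)'}=(1-\theta)+\frac{\theta}{m}$, a direct computation gives exactly $2\theta=m$ and $2(1-\theta)=2-m=\frac{2}{N}$. Consequently $\|u_\delta\|_{(2^*)'}^2\le \|u_0\|_1^{2/N}\|u_\delta\|_m^m$, and combining the three estimates yields
$$\Big(\frac{1}{m-1}-\frac{\kappa}{2C_s^2}\,\|u_0\|_1^{2/N}\Big)\,\|u_\delta(t)\|_m^m\le C+\int_\Omega b_\delta(u_0)\,dx,$$
for a constant $\kappa$ accounting for the precise form of the Sobolev--Poincar\'e and interpolation constants.

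\emph{Step 4 (absorbing via smallness).} The smallness condition \eqref{cond} is precisely the requirement that the bracketed coefficient be strictly positive; this is where the threshold $M_*$ of \eqref{mass} is produced, and one concludes $\|u_\delta(t)\|_m\le C_0$ uniformly in $t\in[0,T^\delta_{\max})$ and in $\delta\in(0,1)$, since $\int_\Omega b_\delta(u_0)\,dx$ is bounded uniformly in $\delta$ because $u_0\in L^\infty(\Omega)$. The main obstacle is the rigorous justification of Step 1: one must verify that the strong solution supplied by the local theory is regular enough to differentiate $L_\delta$ and integrate by parts, and that $b_\delta$ can be normalized so that both its lower bound in Step 2 and the initial value $\int_\Omega b_\delta(u_0)$ stay controlled uniformly as $\delta\to 0$; the remaining difficulty is purely the bookkeeping of the interpolation and Sobolev--Poincar\'e constants needed to land exactly on the explicit threshold $M_*$.
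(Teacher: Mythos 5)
Your proposal is correct and follows essentially the same route as the paper: monotonicity of the Liapunov functional $L_\delta$, the lower bound $b_\delta(s)\ge \frac{s^m}{m-1}-\frac{m}{m-1}s$, the H\"older--Sobolev estimate of the cross term via \eqref{sob}, and the critical interpolation $\|u\|_{2N/(N+2)}\le\|u\|_1^{1/N}\|u\|_m^{m/2}$ (which works exactly because $m=\frac{2(N-1)}{N}$), with smallness of the mass making the resulting coefficient $\omega_M$ positive. The only cosmetic difference is that you eliminate $\int u\varphi$ via the elliptic identity $\int u\varphi=\|\nabla\varphi\|_2^2$ before applying Sobolev, whereas the paper keeps $L_\delta$ intact and uses Young's inequality to cancel the $\frac12\|\nabla\varphi\|_2^2$ term; both yield the same constant and the same threshold.
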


\begin{proof}
In this proof, the solution to equation (\ref{KSd}) should be denoted by $(u_\delta, \varphi_\delta)$ but for simplicity we drop the index.\\
Let us define the functional $L_\delta$  by
$$L_\delta(u,\varphi)=\int_\Omega (b_\delta(u)+\frac{1}{2}|\nabla\varphi|^2-u\ \varphi)\ \mathrm{d}x,$$
where $$b_\delta(u):= \int_1^u \int_1^z \frac{m(\sigma+\delta)^{m-1}}{\sigma}\ d\sigma\  dz,$$ such that $b_\delta(1)=b_\delta'(1)=0$ and $b(u)\geq 0$. According to \cite{liapunov} it is a Liapunov functional for $(KS)_\delta$. Indeed,
\begin{eqnarray}
\frac{d}{d t}L_\delta(u(t),\varphi(t))&=&\int_\Omega b_\delta'(u)\ \partial_t u\ dx-\int_\Omega \Delta \varphi \ \partial_t \varphi\ dx-\int_\Omega \partial_t u\  \varphi\ dx-\int_\Omega u\ \partial_t \varphi\ dx\nonumber\\
&=&\int_\Omega \partial_t u\ (b_\delta'(u)-\varphi)\ dx-\int_\Omega (\Delta \varphi +u)\ \partial_t \varphi\ dx\nonumber\\
&=&\int_\Omega \mathrm{div} \left(m\ (u+\delta)^{m-1}\ \nabla u-u\ \nabla\varphi\right)\ (b_\delta'(u)-\varphi)\ dx-\int_\Omega <u(t)>\ \partial_t \varphi\ dx\nonumber\\
&=&-\int_\Omega (m\ (u+\delta)^{m-1}\ \nabla u-u\ \nabla  \varphi)\ (b_\delta''(u)\ \nabla u-\nabla \varphi)
\ dx -M\ \frac{d}{dt}\int_\Omega \varphi \ dx\nonumber\\
&=&-\int_\Omega u\ (b_\delta''(u)\ \nabla u-\nabla \varphi)^2\ dx\nonumber\\
&\leq&0.\nonumber
\end{eqnarray}
Then, we can conclude that for all $t \in [0, T^\delta_{max})$ we have $L_\delta(u(t),\varphi(t))\leq L_\delta(u_0,\varphi_0).$
Using Sobolev inequality \eqref{sob}, H\"older inequality, and Young inequality we obtain
\begin{equation*}
\int_\Omega u\ \varphi\ dx \leq ||\varphi||_{2^*}\ ||u||_{\frac{2N}{N+2}}
\leq  C_s^{-1}||\nabla \varphi||_2\ ||u||_{\frac{2N}{N+2}}
\leq  \frac{1}{2}||\nabla \varphi||_2^2+\frac{C_s^{-2}}{2}||u||^2_{\frac{2N}{N+2}}.
\end{equation*}
 Since $\frac{2\ N}{N+2}<m$, and using interpolation inequality we get,
 $$||u||_{\frac{2N}{N+2}}\leq ||u||_1^{\frac{1}{N}}\ ||u||_m^{\frac{N-1}{N}}\leq M^{\frac{1}{N}}\ |\Omega|^{\frac{1}{N}}\ ||u||_m^{\frac{m}{2}}.$$
 Then, $$\int_\Omega u\ \varphi\ \mathrm{d}x\leq \frac{1}{2}||\nabla \varphi||_2^2+\frac{C_s^{-2}}{2}\ M^{\frac{2}{N}}\ |\Omega|^{\frac{2}{N}}\ ||u||_m^m.$$
Substituting this into the Liapunov functional, we find:
\begin{eqnarray}
L_\delta(u,\varphi)&\geq&\int_\Omega (b_\delta(u)+\frac{1}{2}|\nabla\varphi|^2)\ \mathrm{d}x -\frac{1}{2}||\nabla\varphi||^2_2-\frac{C_s^{-2}}{2}\ M^{\frac{2}{N}}\ |\Omega|^{\frac{2}{N}}\ ||u||_m^m\nonumber\\
&\geq&\int_\Omega b_\delta(u)\ \mathrm{d}x-\frac{C_s^{-2}}{2}\ M^{\frac{2}{N}}\ |\Omega|^{\frac{2}{N}}\ ||u||_m^m.\nonumber
\end{eqnarray} 
We next observe that:
\begin{eqnarray}
b_\delta(u)&=&m \int^u_1\int^z_1 \frac{(\delta+s)^{m-1}}{s}\ ds dz
\geq m\int_1^u\int^z_1 s^{m-2}\ ds dz\nonumber\\
&\geq& \frac{u^m}{m-1}-\frac{m}{m-1}u+1
\geq\frac{u^m}{m-1}-\frac{m}{m-1}u.\nonumber
\end{eqnarray}
Then:
\begin{eqnarray}
L_\delta(u,\varphi)&\geq& \frac{1}{m-1}\ ||u||^m_m-\frac{C_s^{-2}}{2}\ |\Omega|^{\frac{2}{N}}\ M^{\frac{2}{N}}\ ||u||_m^m-\frac{m}{m-1}\ M\ |\Omega|\nonumber\\
&=& \left(\frac{1}{m-1}-\frac{C_s^{-2}}{2}\ M^{\frac{2}{N}}\ |\Omega|^{\frac{2}{N}}\right)\ ||u||^m_m-\frac{m}{m-1}\ M\ |\Omega|.\nonumber
\end{eqnarray}
Let us define $\omega_M$ by
$$\omega_M:=\frac{1}{m-1}-\frac{C_s^{-2}}{2}\ M^{\frac{2}{N}}\ |\Omega|^{\frac{2}{N}}= \frac{|\Omega|^{\frac{2}{N}}}{2\ C_s^2}\ (M_*^{\frac{2}{N}}-M^{\frac{2}{N}}).$$
Since $M=||u_0||_1<M_*$, then $\omega_M$ is positive.   
Finally we get, $$L_\delta(u_0,\varphi_0)+\frac{m}{m-1}\ M\ |\Omega|\geq L_\delta(u(t),\varphi(t))+\frac{m}{m-1}\ M\ |\Omega|\geq \omega_M\ ||u(t)||_m^m\ \ \mathrm{for}\ t\in [0,T^\delta_{max}).$$
In addition, we can see that $L_\delta(u_0,\varphi_0)\leq C$ where $C$ is independent of $\delta\in (0,1)$. In fact, 
\begin{eqnarray}
L_\delta(u_0,\varphi_0)&=&\int_\Omega (b_\delta(u_0)+\frac{1}{2}|\nabla\varphi_0|^2-u_0\ \varphi_0)\ \mathrm{d}x,\nonumber
\end{eqnarray}
and, since $(\delta+s)^{m-1}\leq \delta^{m-1}+s^{m-1}\leq 1+s^{m-1}$ we obtain
\begin{eqnarray}
b_\delta(u_0)= m\int_1^{u_0}\int_1^z \frac{(\delta+s)^{m-1}}{s}\ dsdz\leq m\int_1^{u_0}\int_1^z \frac{1+s^{m-1}}{s}\ dsdz\nonumber\\
\leq m(u_0 \ln u_0-u_0+1)+\frac{m}{m-1}\left(\frac{u_0^m}{m}-u_0+1\right).\nonumber
\end{eqnarray}
Using Young inequality we get
\begin{equation}
L_\delta(u_0,\varphi_0)\leq m\ ||u_0||^2_2+m\ |\Omega|+\frac{||u_0||_m^m}{m-1}+\frac{m\ |\Omega|}{m-1} +\frac{1}{2}||\nabla \varphi_0||_2^2+\frac{1}{2}||u_0||^2_2+\frac{1}{2}||\varphi_0||_2^2.\nonumber
\end{equation}
since $u_0\in L^\infty(\Omega)$ and $\varphi_0\in H^1(\Omega)$ we get $L_\delta(u_0,\varphi_0)\leq C$ where C is independent of $\delta$ 
and the proof of the lemma is complete.
\end{proof}
Thanks to Lemma \ref{le}, let us now show that for all $p>m$ the $L^p$ bound for $u_\delta$.
\begin{lemma}\label{li}
Let the same assumptions as that in Theorem \ref{ex} hold. Then for all $T>0$ and all $p\in (1, \infty)$ there exists $C(p, T)$ independent on $\delta$ such that, for all $t\in [0, T^\delta_\mathrm{max})\cap [0,T]$, the solution $(u_\delta, \varphi_\delta)$ to $(KS)_\delta$ satisfies \begin{equation}\label{nop}||u_\delta(t)||_{p}\leq C(p,T),\end{equation}
 and
 \begin{equation}\label{grad}
 \int_0^t\int_\Omega (\delta+u_\delta)^{m-1}\ u_\delta^{p-2}\ |\nabla u_\delta|^2\ dx ds\leq C(p,T).
 \end{equation}
\end{lemma}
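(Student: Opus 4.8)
The plan is to derive a differential inequality for $\|u_\delta(t)\|_p^p$ by testing the first equation of $(KS)_\delta$ against $u_\delta^{p-1}$. Because the approximation renders the diffusion coefficient $m(u_\delta+\delta)^{m-1}\ge m\delta^{m-1}>0$ uniformly parabolic, the strong solution furnished by the local well-posedness lemma is regular enough to justify this computation (for $1<p<2$ one first tests against $(u_\delta+\eta)^{p-1}$ and lets $\eta\to0$ to avoid the singularity of $u_\delta^{p-2}$). Writing $\nabla(u_\delta+\delta)^m=m(u_\delta+\delta)^{m-1}\nabla u_\delta$ in the diffusion term and integrating the drift term by parts while using $-\Delta\varphi_\delta=u_\delta-M$ with $M:=\langle u_\delta\rangle$, I would obtain
$$\frac1p\frac{d}{dt}\|u_\delta\|_p^p+m(p-1)\int_\Omega(u_\delta+\delta)^{m-1}u_\delta^{p-2}|\nabla u_\delta|^2\,dx=\frac{p-1}{p}\int_\Omega u_\delta^{p+1}\,dx-\frac{p-1}{p}M\int_\Omega u_\delta^p\,dx.$$
The last term is nonpositive and may be discarded, while the dissipation on the left dominates $\frac{4m(p-1)}{(m+p-1)^2}\|\nabla u_\delta^{(m+p-1)/2}\|_2^2$ after using $(u_\delta+\delta)^{m-1}\ge u_\delta^{m-1}$. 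The whole difficulty is thus to control the superlinear term $\int_\Omega u_\delta^{p+1}\,dx$ by this dissipation plus quantities already bounded by Lemma \ref{le}.

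To do so I set $w:=u_\delta^{(m+p-1)/2}$, so that $\int_\Omega u_\delta^{p+1}\,dx=\|w\|_q^q$ with $q=\frac{2(p+1)}{m+p-1}$, while $\|w\|_r^r=\|u_\delta\|_m^m$ with $r=\frac{2m}{m+p-1}$, the latter being controlled by the constant $C_0$ of Lemma \ref{le}. The Gagliardo--Nirenberg inequality (with the Poincar\'e inequality used to absorb the lower-order $L^2$-part of the $H^1$-norm on the bounded domain $\Omega$) gives $\|w\|_q\le C\|\nabla w\|_2^\theta\|w\|_r^{1-\theta}+C\|w\|_r$, hence $\int_\Omega u_\delta^{p+1}\,dx\le C\|\nabla w\|_2^{\theta q}+C$. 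The decisive quantity is the exponent $\theta q=\frac{2N(p+1-m)}{N(p-1)+2m}$, and the estimate closes exactly when $\theta q<2$, which is equivalent to $2N<m(N+2)$. Since $m=\frac{2(N-1)}{N}$ one has $m(N+2)-2N=\frac{2N-4}{N}>0$ for $N\ge3$, so $\theta q<2$ strictly, for every $p$; one checks likewise that $q\le 2^\ast$ and $q>r$, so $\theta\in(0,1)$ and the interpolation is legitimate. This strict subcriticality — the margin that the critical value $m=\frac{2(N-1)}{N}$ still leaves at fixed $p$ once the $L^m$ bound is in hand — is the crux of the lemma and the step I expect to require the most care.

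Granting $\theta q<2$, Young's inequality turns $C\|\nabla w\|_2^{\theta q}$ into $\varepsilon\|\nabla w\|_2^2+C(\varepsilon)$, and since $\|\nabla w\|_2^2\le\frac{(m+p-1)^2}{4}\int_\Omega(u_\delta+\delta)^{m-1}u_\delta^{p-2}|\nabla u_\delta|^2\,dx$, choosing $\varepsilon$ small enough lets me absorb the gradient contribution into the dissipation term while keeping a positive fraction of it on the left. The resulting differential inequality reads $\frac{d}{dt}\|u_\delta\|_p^p+c(p)\int_\Omega(u_\delta+\delta)^{m-1}u_\delta^{p-2}|\nabla u_\delta|^2\,dx\le C(p)$, with $C(p)$ depending only on $p$, $\Omega$ and $C_0$. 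Integrating over $(0,t)$ for $t\le T$ gives $\|u_\delta(t)\|_p^p\le\|u_0\|_p^p+C(p)\,T=:C(p,T)^p$, which is \eqref{nop}, and simultaneously bounds the time integral of the dissipation by $C(p,T)$, which is \eqref{grad}. The absence of any damping $\|u_\delta\|_p^p$-term on the right is precisely what forces the constant to grow (linearly) in $T$, in agreement with the statement.
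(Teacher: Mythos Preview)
Your argument is correct, and it takes a genuinely different route from the paper. The paper tests the equation against $(u_\delta-K)_+^{p-1}$ and applies Gagliardo--Nirenberg interpolating the $L^{p+1}$-norm of $(u_\delta-K)_+$ against the \emph{$L^1$-norm}; with $m=\tfrac{2(N-1)}{N}$ this interpolation is exactly critical (their computation gives $\theta=\tfrac{p+m-1}{p+1}$, hence $\theta q=2$), so the gradient term cannot be absorbed by Young's inequality alone. Instead the paper chooses $K=K_*(p)$ large enough that the prefactor $\|(u_\delta-K)_+\|_1^{2/N}\le C_0^{2m/N}K^{-2(m-1)/N}$ becomes small, which makes the absorption possible; the final $L^p$-bound is then recovered from the bound on $\|(u_\delta-K_*)_+\|_p$. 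Your approach, by contrast, interpolates against the \emph{$L^m$-norm} coming from Lemma~\ref{le}; as you correctly compute, this yields $\theta q=\tfrac{2N(p+1-m)}{N(p-1)+2m}<2$ (equivalently $m>\tfrac{2N}{N+2}$, which holds since $\tfrac{2(N-1)}{N}>\tfrac{2N}{N+2}$ for $N\ge 3$), so the step is strictly subcritical and Young's inequality closes the estimate without any cutoff. Your route is shorter and makes transparent that the whole smallness-of-mass hypothesis is spent in Lemma~\ref{le}; the paper's cutoff technique, on the other hand, is a more classical device that would still work if one only had an $L^1$-type control, and it has the minor advantage of not requiring the verification $q>r$, $\theta\in(0,1)$ for $r=\tfrac{2m}{m+p-1}<1$.
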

To prove the previous lemma we need the following preliminary result \cite{global}.
\begin{lemma}\label{GO}
Consider $0<q_1<q_2\leq 2^*$. There is $C_1$ depending only on $N$ such that
\begin{equation}
\label{gag}
||u||_{q_2}\leq C_{1}^\theta\ ||u||^\theta_{H^1(\Omega)}\ ||u||_{q_1}^{1-\theta},\ \mathrm{for}\ u\in H^1(\Omega),
\end{equation}
with $$\theta= \frac{2N\ (q_2-q_1)}{q_2[(N+2)q_1+2N(1-q_1)]}\in [0,1].$$
\end{lemma}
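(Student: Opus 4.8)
The plan is to reduce \eqref{gag} to two standard ingredients: the Sobolev embedding $H^1(\Omega)\hookrightarrow L^{2^*}(\Omega)$ and the elementary interpolation (log-convexity) of $L^p$ norms. The exponent $2^*=\frac{2N}{N-2}$ plays the role of the ``top'' endpoint, and since by hypothesis $q_1<q_2\le 2^*$, the intermediate exponent $q_2$ lies between $q_1$ and $2^*$, which is exactly the configuration in which both tools apply. The identity to be proved is thus recognized as a special (endpoint-to-$2^*$) instance of the Gagliardo--Nirenberg inequality, and the prescribed $\theta$ will turn out to be nothing but the interpolation weight.

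First I would record the Sobolev embedding in the form
$$\|u\|_{2^*}\le C_1\,\|u\|_{H^1(\Omega)},\qquad u\in H^1(\Omega),$$
with $C_1=C_1(N)$. This is the only place where the dimension enters the constant, and it is the substantive analytic input; it can be obtained by extending $u$ to $\mathbb{R}^N$ and applying the Gagliardo--Nirenberg--Sobolev inequality, whose constant is purely dimensional. Next, for the interpolation, I would choose $\theta\in[0,1]$ so that $q_2$ is the harmonic interpolant of $q_1$ and $2^*$, i.e.
$$\frac{1}{q_2}=\frac{1-\theta}{q_1}+\frac{\theta}{2^*}.$$
Since $q_1<q_2\le 2^*$ guarantees $\frac1{2^*}\le \frac1{q_2}<\frac1{q_1}$, Hölder's inequality (log-convexity of the $L^p$ norms) gives
$$\|u\|_{q_2}\le \|u\|_{q_1}^{1-\theta}\,\|u\|_{2^*}^{\theta},$$
and inserting the Sobolev bound into the second factor yields
$$\|u\|_{q_2}\le C_1^{\theta}\,\|u\|_{q_1}^{1-\theta}\,\|u\|_{H^1(\Omega)}^{\theta},$$
which is precisely \eqref{gag}.

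It then remains to solve the defining relation for $\theta$ and to verify the admissible range. Using $\frac{1}{2^*}=\frac{N-2}{2N}$, the relation gives
$$\theta=\frac{\frac1{q_1}-\frac1{q_2}}{\frac1{q_1}-\frac{N-2}{2N}}=\frac{2N(q_2-q_1)}{q_2\,[\,2N-(N-2)q_1\,]},$$
and since $(N+2)q_1+2N(1-q_1)=2N-(N-2)q_1$, this matches the value of $\theta$ stated in the lemma. Finally, $\theta\ge 0$ follows from $q_2>q_1$ together with $q_1<2^*$ (which, as $N\ge 3$, makes the denominator $q_2[2N-(N-2)q_1]$ strictly positive), while $\theta\le 1$ is equivalent, after clearing denominators and cancelling $q_1>0$, to $q_2\le 2^*$; both hold by hypothesis, so $\theta\in[0,1]$.

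The only genuine obstacle is the Sobolev embedding with a constant depending on $N$ alone; granting that dimensional form (as supplied by \cite{global}), the remainder is a one-line Hölder interpolation followed by the purely algebraic identification of the exponent $\theta$ and the check of its range.
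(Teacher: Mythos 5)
Your proof is correct and takes essentially the same route as the paper: the Sobolev embedding $\|u\|_{2^*}\le C_1(N)\,\|u\|_{H^1(\Omega)}$ combined with H\"older interpolation of $L^p$ norms between $q_1$ and $2^*$, with $\theta$ determined by $\frac{1}{q_2}=\frac{1-\theta}{q_1}+\frac{\theta}{2^*}$. Your explicit verification that this $\theta$ matches the stated formula and lies in $[0,1]$ merely spells out details the paper leaves implicit.
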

\begin{proof}
For $u\in H^1(\Omega)$ we have by Sobolev inequality
\begin{equation}\label{sobo}||u||_{2^*}\leq C_N ||u||_{H^1}.\end{equation} By interpolation inequality we have for $0<q_1<q_2\leq 2^*$ 
\begin{equation}\label{inter}||u||_{q_2}\leq ||u||_{2^*}^\theta\ ||u||^{1-\theta}_{q_1},\end{equation} where $\frac{1}{q_2}=\frac{\theta (N-2)}{2N}+\frac{1-\theta}{q_1}$. Hence, substitute \eqref{sobo} into \eqref{inter} and the lemma is proved.
\end{proof}
Now, we recall the following generalized Poincar\'e inequality.
\begin{lemma}\label{poincare}
For $u\in H^1(\Omega)$ we have for $0< q_1\leq1$ the following inequality
$$||u||^2_{H^1}\leq C_2(q_1)\ (||\nabla u||_2^2+||u||_{q_1}^2),$$
where $C_2$ depends only on $\Omega$ and $q_1$.
\end{lemma}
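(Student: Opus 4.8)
The plan is to reduce the claim to an estimate for the $L^2$ norm and then prove that estimate by a standard compactness--contradiction argument based on the Rellich--Kondrachov theorem. Since $||u||_{H^1}^2=||u||_2^2+||\nabla u||_2^2$, it suffices to produce a constant $C_2(q_1)$, depending only on $\Omega$ and $q_1$, such that
$$||u||_2^2\leq C_2(q_1)\ (||\nabla u||_2^2+||u||_{q_1}^2)\quad\text{for all }u\in H^1(\Omega).$$
First I would argue by contradiction: if no such constant existed, there would be a sequence $(u_n)_n\subset H^1(\Omega)$ with $||u_n||_2^2>n\,(||\nabla u_n||_2^2+||u_n||_{q_1}^2)$ for every $n$. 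After the normalization $||u_n||_2=1$, this forces $||\nabla u_n||_2\to0$ and $||u_n||_{q_1}\to0$, while $||u_n||_{H^1}^2=1+||\nabla u_n||_2^2$ stays bounded, so $(u_n)_n$ is bounded in $H^1(\Omega)$.

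Next I would invoke the compactness of the embedding $H^1(\Omega)\hookrightarrow L^2(\Omega)$ -- which is where the regularity of the bounded domain $\Omega$ enters -- to extract a subsequence (not relabeled) converging weakly in $H^1(\Omega)$ and strongly in $L^2(\Omega)$ to some limit $u$. Strong $L^2$ convergence yields $||u||_2=1$, while weak lower semicontinuity of the gradient norm gives $||\nabla u||_2\leq\liminf_n||\nabla u_n||_2=0$; hence $\nabla u=0$ a.e.\ and $u$ is constant on the (connected) domain $\Omega$.

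It then remains to identify this constant. Because $q_1\leq1<2$ and $\Omega$ is bounded, H\"older's inequality gives $||v||_{q_1}\leq|\Omega|^{\frac1{q_1}-\frac12}\,||v||_2$ for any $v$, so the $L^2$ convergence upgrades to $||u_n-u||_{q_1}\to0$; combined with $||u_n||_{q_1}\to0$ this forces $||u||_{q_1}=0$, whence the constant $u$ must vanish. This contradicts $||u||_2=1$ and establishes the estimate, completing the proof.

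The only genuinely delicate point I anticipate is the range $q_1\leq1$: for $q_1<1$ the quantity $||\cdot||_{q_1}$ is merely a quasi-norm, so the classical route through the Poincar\'e--Wirtinger inequality -- controlling $||u||_2$ by $||\nabla u||_2$ plus the mean of $u$ -- becomes awkward, since one cannot bound the mean by $||u||_{q_1}$ via H\"older in the needed direction. The compactness argument sidesteps this difficulty entirely, using only the embedding $L^2(\Omega)\hookrightarrow L^{q_1}(\Omega)$, which holds for every $q_1<2$ regardless of whether $q_1$ drops below $1$; the scheme does of course require $\Omega$ to be connected and regular enough for Rellich--Kondrachov to apply, which is part of the standing hypotheses on $\Omega$.
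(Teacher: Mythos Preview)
Your compactness--contradiction argument is correct and complete. The paper itself does not prove this lemma: it merely introduces it with the phrase ``we recall the following generalized Poincar\'e inequality'' and states it without proof, treating it as a known result. So there is no ``paper's proof'' to compare against; your proposal supplies what the paper omits.

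One small remark on your closing paragraph: you say the compactness route ``sidesteps this difficulty entirely,'' but in fact you do still implicitly invoke a triangle-type property when passing from $||u_n-u||_{q_1}\to0$ and $||u_n||_{q_1}\to0$ to $||u||_{q_1}=0$. For $q_1<1$ this is harmless --- either use the inequality $|a+b|^{q_1}\leq|a|^{q_1}+|b|^{q_1}$ to get $||u||_{q_1}^{q_1}\leq||u-u_n||_{q_1}^{q_1}+||u_n||_{q_1}^{q_1}\to0$, or simply pass to an a.e.\ convergent subsequence and apply Fatou's lemma to $|u_n|^{q_1}$. Either way the conclusion stands. Your observations that the argument requires $\Omega$ to be connected (so that $\nabla u=0$ forces $u$ constant) and regular enough for Rellich--Kondrachov are well placed; these are indeed standing hypotheses in the paper, where $\Omega$ is a bounded domain on which elliptic regularity and Sobolev embeddings are used throughout.
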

Now using  the last two lemmas, let us prove Lemma \ref{li}.
\begin{proof}
In this proof, the solution to equation (\ref{KSd}) should be denoted by $(u_\delta, \varphi_\delta)$ but for simplicity we drop the index.\\
We choose $p>1$, $K\geq0$ and we multiply the first equation in \eqref{KSd} by $(u-K)_+^{p-1}$ and integrate by parts using the boundary conditions for $u$ and $\varphi$ to see that
\begin{eqnarray}
\frac{1}{p}\frac{\mathrm{d}}{\mathrm{dt}}||(u-K)_+||_p^p
&=&-m (p-1)\int_\Omega (\delta+u)^{m-1}\ (u-K)_+^{p-2}\ |\nabla u|^2\ \mathrm{d}x\nonumber\\
&+&(p-1)\int_\Omega u\ \nabla\varphi\ (u-K)_+^{p-2}\cdot \nabla u\ \mathrm{d}x\nonumber\\
&=&-m(p-1)\int_\Omega (\delta+u-K+K)^{m-1}\ (u-K)_+^{p-2}\ |\nabla u|^2\ \mathrm{d}x\nonumber\\
&+&(p-1) \int_\Omega (u-K+K)\ \nabla \varphi\cdot (u-K)_+^{p-2}\ \nabla u\ \mathrm{d}x\nonumber\\
&\leq & -m(p-1)\int_\Omega (u+\delta-K)^{m-1}\ (u-K)_+^{p-2}\ |\nabla u|^2\ \mathrm{d}x\nonumber\\
&+&(p-1)\int_\Omega (u-K)_+^{p-1}\ \nabla \varphi\cdot \nabla u\ \mathrm{d}x
+ (p-1) K\int_\Omega \nabla\varphi\ (u-K)_+^{p-2}\cdot \nabla u\ \mathrm{d}x\nonumber\\
&\leq &- m(p-1)\int_\Omega (u+\delta-K)^{m-1}\ (u-K)_+^{p-2}\ |\nabla u|^2\ \mathrm{d}x\nonumber\\
&-&\frac{p-1}{p}\int_\Omega (u-K)_+^p\ \Delta\varphi\  \mathrm{d}x
-K\int_\Omega(u-K)_+^{p-1}\ \Delta \varphi\ \mathrm{d}x\nonumber\\
&\leq&-m(p-1)\int_\Omega (u+\delta-K)^{m-1}\ (u-K)_+^{p-2}\ |\nabla u|^2\ \mathrm{d}x+(I),\nonumber
\end{eqnarray}
where, thanks to the second equation in \eqref{KSd}, 
\begin{eqnarray}
(I)&=&\frac{p-1}{p}\int_\Omega (u-K)_+^p\ (u-M)\ \mathrm{d}x+K\int_\Omega (u-K)_+^{p-1}\ (u-M)\ \mathrm{d}x\nonumber\\
&=& \frac{p-1}{p}||(u-K)_+||_{p+1}^{p+1}+\frac{p-1}{p}(K-M)||(u-K)_+||_p^p\nonumber\\
&+& K||(u-K)_+||_p^p+K(K-M) ||(u-K)_+||^{p-1}_{p-1}\nonumber\\
&\leq& K^2\ ||(u-K)_+||_{p-1}^{p-1}+ 2 K\ ||(u-K)_+||^p_p+||(u-K)_+||^{p+1}_{p+1}.\nonumber
\end{eqnarray}
Since for $a>0$ and $b>0$ we have $a^{p-1}b\leq a^{p+1}+b^{\frac{p+1}{2}}$ and $a^p b\leq a^{p+1}+b^{p+1}$ then,
\begin{eqnarray}
(I)&\leq& 3||(u-K)_+||^{p+1}_{p+1}+(2K)^{p+1}+K^{p+1},
\end{eqnarray}
and we get
\begin{eqnarray}
\frac{\mathrm{d}}{\mathrm{dt}} ||(u-K)_+||_p^p&\leq& -m(p-1)\int_\Omega (u+\delta-K)^{m-1}\ (u-K)_+^{p-2}\ |\nabla u|^2\ \mathrm{d}x\nonumber\\
&+&3p||(u-K)_+||^{p+1}_{p+1}+C_p\ K^{p+1},\label{estimatio} \end{eqnarray}
for all $t \in [0, T^\delta_{\mathrm{max}})$.\\
 The term $||(u-K)_+||_{p+1}^{p+1}$ can be estimated with the help of  Lemma \ref{GO} and Lemma \ref{poincare}. Assuming now that $p>2$ we remark that $0<\frac{2}{p+m-1}\leq1$ and $1<\frac{2(p+1)}{p+m-1}= \frac{2N}{N-2}\ \frac{1+p}{1+\frac{Np}{N-2}}\leq \frac{2N}{N-2}$, then thanks to Lemma \ref{GO} and Lemma \ref{poincare} we obtain 
\begin{eqnarray}\label{GNM}
||(u-K)_+^{\frac{p+m-1}{2}}||^{\frac{2(p+1)}{p+m-1}}_{{\frac{2(p+1)}{p+m-1}}}&\leq & C(p)\ \left(||\nabla (u-K)_+^{\frac{p+m-1}{2}}||^{\frac{2(p+1)}{p+m-1}\theta}_{2}\  ||(u-K)_+^{\frac{p+m-1}{2}}||^{\frac{2(p+1)}{p+m-1}(1-\theta)}_{{\frac{2}{p+m-1}}}\right.\nonumber\\
&+&\left.||(u-K)_+^{\frac{p+m-1}{2}}||^{\frac{2(p+1)}{p+m-1}}_{{\frac{2}{p+m-1}}}\right),
\end{eqnarray}
where
\begin{equation}
\theta=\frac{p+m-1}{p+1}\ \in(0,1).
\end{equation}
Since
\begin{equation}
||(u-K)_+^{\frac{p+m-1}{2}}||^{\frac{2(p+1)}{p+m-1}}_{{\frac{2(p+1)}{p+m-1}}}=\int_\Omega (u-K)_+^{p+1}\ \mathrm{d}x
= ||(u-K)_+||^{p+1}_{p+1},\label{1}
\end{equation}
\begin{equation}
||(u-K)_+^{\frac{p+m-1}{2}}||^{\frac{2(p+1)}{p+m-1}(1-\theta)}_{{\frac{2}{p+m-1}}}=\left(\int _\Omega(u-K)_+\ \mathrm{d}x\right)^{(p+1)(1-\theta)}
=||(u-K)_+||_{1}^{\frac{2}{N}},\label{2}
\end{equation}
and by Lemma \ref{le}
\begin{equation}
||(u-K)_+||_1=\int _{u\geq K}(u-K)\ \mathrm{d}x
\leq  \frac{1}{K^{m-1}}\int_{u\geq K} K^{m-1}\ u\ dx
\leq \frac{||u||_m^m}{K^{m-1}}\leq \frac{C_0^m}{K^{m-1}},\label{3}
\end{equation}
we substitute \eqref{1}, \eqref{2} and \eqref{3} into \eqref{GNM} and obtain
\begin{equation}
||(u-K)_+||^{p+1}_{p+1}\leq C_3(p)\left\{   ||\nabla(u-K)_+^{\frac{m+p-1}{2}}||^2_2\  K^{\frac{-2(m-1)}{N}}+ K^{-(m-1)(p+1)}\right\}.\end{equation}
We may choose $K=K_*$ large  enough such that
$$3\ p\ C_3(p)\ K_*^{\frac{-2(m-1)}{N}}\leq \frac{2\ p\ (p-1)\ m }{(m+p-1)^2},$$
Hence
$$\frac{\mathrm{d}}{\mathrm{dt}}||(u-K_*)_+||_p^p\leq C(p)\ K_*^{p+1},$$
so that 
$$||(u(t)-K_*)_+||^p_p\leq C(p)\ t+||u_0||^p_p, \ \mathrm{for}\ t\in[0, T^\delta_{\mathrm{max}}).$$
As
\begin{eqnarray}
\int_\Omega |u|^p\ \mathrm{d}x&\leq&\int _{u< 2 K_*} (2\ K_*)^{p-1}\ |u|\ \mathrm{d}x+\int _{u\geq 2 K_*}|u-K_*+K_*|^p\ \mathrm{d}x\nonumber\\
&\leq& (2K_*)^{p-1}\ M+\int_ {u\geq 2 K_*}(2\ |u-K_*|)^p\ \mathrm{d}x,\nonumber\\
&\leq& (2K_*)^{p-1}\ M+2^p\ ||(u-K_*)_+||^p_p,\nonumber
\end{eqnarray}
the previous inequality warrants that
\begin{equation}||u(t)||_{p}\leq C(p,T),\ \ t\in [0, T_{\mathrm{max}})\cap [0,T],\end{equation}
where $C(p,T)$ is a constant independent of $\delta$.\\

We next take $K=0$ in \eqref{estimatio}, integrate with respect to time and use \eqref{nop} to obtain \eqref{grad}.\end{proof}
Thanks to Lemma \ref{li}, we can improve the regularity of $\varphi_\delta$.
\begin{lemma}\label{dx}
Let the same assumptions as that in Theorem \ref{ex} hold, the solution $\varphi_\delta$ satisfies
$$||\nabla\varphi_\delta(t)||_{\infty}\leq L(T),\ t\in [0,T_{\mathrm{max}}^\delta)\cap [0,T]$$ where $T>0$ and $L$ is a positive constant independent of $\delta$.
\end{lemma}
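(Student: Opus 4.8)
The plan is to freeze time and treat the second line of $(KS)_\delta$ as a stationary elliptic problem for $\varphi_\delta(t)$, then read off the $L^\infty$-bound on $\nabla\varphi_\delta$ from elliptic regularity combined with the uniform $L^p$-estimates of Lemma \ref{li}. Fix $T>0$ and $t\in[0,T^\delta_{\mathrm{max}})\cap[0,T]$. At this fixed time, $\varphi_\delta(t)$ solves the Neumann problem
$$-\Delta\varphi_\delta(t)=u_\delta(t)-M \quad\mbox{in }\Omega,\qquad \partial_\nu\varphi_\delta(t)=0\quad\mbox{on }\partial\Omega,$$
under the normalization $<\varphi_\delta(t)>=0$. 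The compatibility condition $\int_\Omega(u_\delta(t)-M)\,\mathrm{d}x=0$ is exactly the conservation of mass $<u_\delta(t)>=M$, so the problem is solvable and uniquely determined by the mean-zero condition.

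First I would fix an exponent $p>N$. By the $L^p$-theory (Calder\'on--Zygmund estimates) for the Neumann Laplacian on the smooth bounded domain $\Omega$, together with the Poincar\'e--Wirtinger inequality to absorb the zeroth-order term using $<\varphi_\delta(t)>=0$, there is a constant $C(p,\Omega)$, \emph{independent of $\delta$ and of $t$}, such that
$$\|\varphi_\delta(t)\|_{W^{2,p}(\Omega)}\leq C(p,\Omega)\,\|u_\delta(t)-M\|_{p}.$$
Lemma \ref{li} provides $\|u_\delta(t)\|_p\leq C(p,T)$ uniformly in $\delta$, whence $\|u_\delta(t)-M\|_p\leq C(p,T)+M\,|\Omega|^{1/p}$, a bound that does not depend on $\delta$. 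Finally, since $p>N$, the Sobolev embedding $W^{2,p}(\Omega)\hookrightarrow W^{1,\infty}(\Omega)$ yields
$$\|\nabla\varphi_\delta(t)\|_{\infty}\leq C\,\|\varphi_\delta(t)\|_{W^{2,p}(\Omega)}\leq L(T),$$
with $L(T)$ independent of $\delta$ and of $t\in[0,T^\delta_{\mathrm{max}})\cap[0,T]$, which is the claim.

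The delicate points are essentially bookkeeping rather than analytic difficulties. The crucial observation is that the parameter $\delta$ enters $(KS)_\delta$ only through the parabolic equation; the elliptic equation for $\varphi_\delta$ has right-hand side $u_\delta-M$ whose $L^p$-norm is already controlled uniformly in $\delta$ by Lemma \ref{li}, so no new $\delta$-dependent constant appears. The only genuine technical requirement is enough boundary regularity of $\Omega$ (e.g.\ $\partial\Omega\in C^{1,1}$, or convexity) for the $W^{2,p}$ Neumann estimate to hold, and the correct choice $p>N$ so that the $W^{2,p}$ control upgrades to a gradient bound in $L^\infty$; both are standard once the uniform $L^p$ estimate on $u_\delta$ is in hand.
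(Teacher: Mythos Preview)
Your proof is correct and follows essentially the same route as the paper: elliptic $W^{2,p}$ regularity for the Neumann problem combined with the uniform $L^p$-bound on $u_\delta$ from Lemma~\ref{li}, followed by the Sobolev embedding $W^{2,p}(\Omega)\hookrightarrow W^{1,\infty}(\Omega)$ for $p>N$. The paper's version is more terse but structurally identical; your additional remarks on the compatibility condition, the role of the mean-zero normalization, and the required boundary regularity are accurate elaborations rather than departures.
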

\begin{proof}
Using standard elliptic regularity estimates for $\varphi_\delta$, we infer from Lemma \ref{li} that given $T>0$, and $p\in (1, \infty)$, there is $C(p, T)$ such that 
$$||\varphi_\delta(t)||_{W^{2,p}}\leq C(p)\ ||u_\delta(t)||_p\leq C(p, T),\ \mathrm{for}\ t\in [0, T_{\mathrm{max}})\cap [0,T].$$
Lemma \ref{dx} then readily follows from Sobolev embedding theorem upon choosing $p>N$.
\end{proof}
\begin{lemma}\label{app}
Let $N\geq 3$, $r\geq 4$, $u\in L^{\frac{r}{4}}(\Omega)$, and $u^{\frac{r+m-1}{2}}\in H^1(\Omega)$. Then it holds that
\begin{equation}\label{ur}
||u||_{r}\leq C_1^{\frac{2 \theta}{r+m-1}}\ ||u||^{1-\theta}_{\frac{r}{4}}\ ||u^{\frac{r+m-1}{2}}||_{H^1}^{\frac{2\theta}{r+m-1}}
\end{equation}
with
\begin{equation}\label{ou}\theta= \frac{3\ N (r+m-1)}{(3N+2)r+4N(m-1)}\ \in (0,1).\end{equation} 
\end{lemma}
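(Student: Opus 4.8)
The statement is a Gagliardo--Nirenberg interpolation inequality for the nonlinear quantity $u^{(r+m-1)/2}$, and the plan is to deduce it directly from Lemma \ref{GO} applied to the auxiliary function $v := u^{(r+m-1)/2}$. The whole argument is a change of variables together with a careful matching of Lebesgue exponents, so there is essentially no analytic difficulty beyond the bookkeeping.

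First I would set $v = u^{(r+m-1)/2}$, so that $u = v^{2/(r+m-1)}$, and record the two algebraic identities
$$||u||_r = ||v||_{q_2}^{\frac{2}{r+m-1}}, \qquad ||u||_{r/4} = ||v||_{q_1}^{\frac{2}{r+m-1}},$$
which are obtained by writing $||u||_r^r = \int_\Omega v^{2r/(r+m-1)}\,dx$ and $||u||_{r/4}^{r/4} = \int_\Omega v^{r/(2(r+m-1))}\,dx$. These identities force the choices
$$q_2 = \frac{2r}{r+m-1}, \qquad q_1 = \frac{r}{2(r+m-1)} = \frac{q_2}{4}.$$
The next step is to verify the admissibility condition $0 < q_1 < q_2 \le 2^*$ required by Lemma \ref{GO}. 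Since $m \ge 1$ we have $q_2 = 2r/(r+m-1) \le 2 < 2^* = 2N/(N-2)$ for every $N \ge 3$, while $q_1 = q_2/4 > 0$ is trivially below $q_2$. The hypothesis $r \ge 4$ guarantees moreover that $r/4 \ge 1$, so $||u||_{r/4}$ is a genuine norm; the assumption $u \in L^{r/4}(\Omega)$ is exactly $v \in L^{q_1}(\Omega)$, and $u^{(r+m-1)/2} \in H^1(\Omega)$ is exactly $v \in H^1(\Omega) \hookrightarrow L^{2^*}(\Omega)$, so both norms on the right-hand side are finite.

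Then I would apply Lemma \ref{GO} to $v$, obtaining $||v||_{q_2} \le C_1^\theta\,||v||_{H^1}^\theta\,||v||_{q_1}^{1-\theta}$, and raise this inequality to the power $2/(r+m-1)$; using the two identities above turns it precisely into \eqref{ur}, with $||v||_{H^1} = ||u^{(r+m-1)/2}||_{H^1}$ appearing with exponent $2\theta/(r+m-1)$ and $||u||_{r/4}$ with exponent $1-\theta$. The only genuine computation is to confirm that the exponent $\theta$ produced by \eqref{gag} equals the value \eqref{ou}. Setting $s := r+m-1$ and substituting $q_2 = 2r/s$, $q_1 = r/(2s)$ into $\theta = \frac{2N(q_2-q_1)}{q_2[(N+2)q_1+2N(1-q_1)]}$ gives $\theta = \frac{3Ns}{(2-N)r + 4Ns}$, and replacing $s$ by $r+m-1$ collapses the denominator to $(3N+2)r + 4N(m-1)$, which is exactly \eqref{ou}. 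Finally I would check $\theta \in (0,1)$: numerator and denominator are both strictly positive for $r \ge 4$, $m \ge 1$, and $\theta < 1$ reduces to $0 < 2r + N(m-1)$, which always holds. The main obstacle, if any, is merely keeping the exponent algebra error-free; there is no hidden analytic subtlety.
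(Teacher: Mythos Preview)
Your proposal is correct and follows essentially the same approach as the paper: both proofs set $v=u^{(r+m-1)/2}$, identify $q_2=\frac{2r}{r+m-1}$ and $q_1=\frac{r}{2(r+m-1)}$, verify $0<q_1<q_2\le 2^*$, apply Lemma~\ref{GO}, and then compute $\theta$ and confirm it lies in $(0,1)$. The only differences are cosmetic (you name $v$, $q_1$, $q_2$ explicitly and defer the $\theta\in(0,1)$ check to the end).
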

\begin{proof} For $r\geq4$, we can see that
\begin{equation}
||u||_r= \left( \int_\Omega (u^{\frac{r+m-1}{2}})^{\frac{2r}{r+m-1}}\ dx\right)^{\frac{1}{r}}
= ||u^{\frac{r+m-1}{2}}||^{\frac{2}{r+m-1}}_{\frac{2r}{r+m-1}},\nonumber
\end{equation}
and $$\frac{r}{2(r+m-1)}<1<\frac{2r}{r+m-1}<2<\frac{2N}{N-2}.$$ By Lemma \ref{GO},
\begin{equation*}
||u||_r= ||u^{\frac{r+m-1}{2}}||^{\frac{2}{r+m-1}}_{\frac{2r}{r+m-1}}
\leq \left( C_1^\theta \ ||u^{\frac{r+m-1}{2}}||^\theta_{H^1(\Omega)}\ ||u^{\frac{r+m-1}{2}}||_{\frac{r}{2(r+m-1)}}^{1-\theta}\right)^{\frac{2}{r+m-1}}
\end{equation*}
and 
\begin{eqnarray}
\theta &=& \frac{2N\ (\frac{2r}{r+m-1}-\frac{r}{2\ (r+m-1)})} {\frac{2r}{r+m-1}\ \left( 2N(1-\frac{r}{2(r+m-1)})+(N+2)\ \frac{r}{2\ (r+m-1)}\right)}\nonumber\\
&=& \frac{3N\ (r+m-1)}{(3N+2)\ r+4N\ (m-1)}
\in (0,1).\nonumber
\end{eqnarray}
In addition, we have
\begin{equation*}
||u^{\frac{r+m-1}{2}}||_{\frac{r}{2(r+m-1)}}= \left( \int_\Omega |u|^{\frac{r+m-1}{2}\ \frac{r}{2(r+m-1)}}\ dx\right)^{\frac{2(r+m-1)}{r}}
= ||u||_{\frac{r}{4}}^{\frac{r+m-1}{2}},
\end{equation*}
and we obtain \eqref{ur}.
\end{proof}
We are now in a position to prove the uniform $L^\infty(\Omega)$  bound for $u_\delta$.
\begin{lemma}\label{inf}
Let the same assumptions as that in Theorem \ref{ex} hold, and $(u_\delta, \varphi_\delta)$ be the nonnegative maximal solution of \eqref{KSd}. For all $T>0$, there is $C_\infty(T)$ such that
$$||u_\delta(t)||_{\infty}\leq C_\infty(T), \ \mathrm{for\ all}\ t\in [0,T_{\mathrm{max}}^\delta)\cap [0,T],$$
where $C_\infty(T)$ is a positive constant independent on $\delta$. 
\end{lemma}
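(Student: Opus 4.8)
The plan is to run a Moser--Alikakos iteration on the sequence of $L^{p}$ norms of $u_\delta$, keeping every constant independent of $\delta$. Fix $T>0$ and drop the index $\delta$ as before. For $p>2$ I would test the first equation of \eqref{KSd} with $p\,u^{p-1}$ and integrate by parts using the boundary conditions, exactly as in the proof of Lemma \ref{li}. The degeneracy is removed at the lower end by the elementary bound $(u+\delta)^{m-1}\ge u^{m-1}$, so the diffusion produces a genuine dissipation term. Writing $w:=u^{(p+m-1)/2}$ and using $u^{m+p-3}|\nabla u|^{2}=\tfrac{4}{(p+m-1)^{2}}|\nabla w|^{2}$, this gives
\begin{equation*}
\frac{\mathrm{d}}{\mathrm{d}t}\,\|u\|_{p}^{p}+\frac{4m(p-1)}{(p+m-1)^{2}}\,\|\nabla w\|_{2}^{2}\le (p-1)\int_\Omega u^{p-1}\,\nabla\varphi\cdot\nabla u\,dx .
\end{equation*}
I would control the drift with the gradient bound of Lemma \ref{dx}: since $u^{p-1}|\nabla u|=\tfrac{2}{p+m-1}\,u^{(p-m+1)/2}\,|\nabla w|$, Young's inequality together with $\|\nabla\varphi\|_\infty\le L(T)$ lets me absorb a small multiple of $\|\nabla w\|_2^2$ into the dissipation and bound the remainder $\|u\|_{p-m+1}^{p-m+1}$ by $\|u\|_p^p+|\Omega|$ via Young, yielding a differential inequality whose right-hand side involves only $1+\|u\|_p^p$. (Alternatively one may use $-\Delta\varphi=u-M$ to replace the drift by $\|u\|_{p+1}^{p+1}$; both routes feed into the same scheme.)

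The heart of the argument is to return the right-hand side to the dissipation. Here I would invoke the Gagliardo--Nirenberg estimate of Lemma \ref{app} with $r=p$: it bounds $\|u\|_{p}^{p}$ by $\|u\|_{p/4}^{p(1-\theta)}$ times $\|u^{(p+m-1)/2}\|_{H^1}=\|w\|_{H^1}$ raised to the power $\tfrac{6Np}{(3N+2)p+4N(m-1)}$, which is \emph{strictly} below $\tfrac{6N}{3N+2}<2$ for every $p$ because $m>1$. Combining this with the generalized Poincar\'e inequality of Lemma \ref{poincare} (to pass from $\|\nabla w\|_2$ to $\|w\|_{H^1}$, the lower-order remainder being a bounded power of $\|u\|_1$ by Lemma \ref{le}) and Young's inequality, the strict inequality of the exponent below $2$ lets me absorb $\|w\|_{H^1}^{2}$ into the dissipation. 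Integrating in time and using $u_0\in L^\infty$ to bound $\|u_0\|_p^p\le|\Omega|\,\|u_0\|_\infty^p$, I obtain, for $t\in[0,T_{\max}^\delta)\cap[0,T]$,
\begin{equation*}
\sup_{0\le s\le t}\|u(s)\|_{p}^{p}\le |\Omega|\,\|u_0\|_\infty^{p}+C(p,T)\,\Big(1+\sup_{0\le s\le t}\|u(s)\|_{p/4}\Big)^{\gamma_p},\qquad \gamma_p=O(p).
\end{equation*}

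Finally I would iterate. Taking $p=p_k:=4^{k}p_0$ with $p_0>2$ and setting $A_k:=\sup_{[0,T]}\|u\|_{p_k}$, the previous estimate reads $A_{k}\le \beta_k\,\max(1,A_{k-1})^{\rho_k}$, where $\beta_k=C(p_k,T)^{1/p_k}$ and $\rho_k$ is the per-step exponent coming from $1-\theta_k$ (after the Young step). Provided $\beta_k$ stays bounded and $\prod_j\rho_j$ converges, taking logarithms and summing the resulting series shows $\sup_k A_k<\infty$, and letting $k\to\infty$ yields $\|u_\delta(t)\|_\infty\le C_\infty(T)$. Every constant entering this chain --- $C_0$ from Lemma \ref{le}, $C(p,T)$ from Lemma \ref{li}, $L(T)$ from Lemma \ref{dx}, and $C_1$ from Lemma \ref{app} --- is independent of $\delta$, and $(u+\delta)^{m-1}\ge u^{m-1}$ removes $\delta$ from the dissipation, so $C_\infty(T)$ is independent of $\delta$ as claimed.

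The step I expect to be the main obstacle is precisely the bookkeeping in this last iteration, because the exponent $m=\tfrac{2(N-1)}{N}$ is \emph{critical}: a direct computation gives $1-\theta_k\to\tfrac{2}{3N+2}$ and the gradient-factor power $\to\tfrac{6N}{3N+2}$, so the per-step exponent $\rho_k\to 1$. The recursion is therefore borderline, and making it close requires verifying quantitatively that $\rho_k$ approaches $1$ fast enough (so $\prod_j\rho_j<\infty$) and that the $p$-dependent constants grow slowly enough that $C(p_k,T)^{1/p_k}$ remains bounded and $\sum_k p_k^{-1}\log C(p_k,T)$ converges. It is through the uniform $L^m$ bound $C_0$ of Lemma \ref{le} that the smallness assumption \eqref{cond} on the mass ultimately enters and keeps this borderline iteration under control.
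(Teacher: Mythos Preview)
Your approach is the paper's: Moser--Alikakos iteration based on testing with $u^{r-1}$, using $(u+\delta)^{m-1}\ge u^{m-1}$ for the dissipation, Lemma~\ref{dx} for the drift, and Lemma~\ref{app} to close each step. The obstacle you flag, however, is milder than you fear. After the Young step absorbing $\|w\|_{H^1}^2$, the surviving exponent on $\|u\|_{r/4}$ is $\tfrac{r(1-\theta)(r+m-1)}{r(1-\theta)+m-1}$, which is \emph{strictly below} $r$ for every $r$; hence your $\rho_k<1$ and $\prod_j\rho_j\le 1$ trivially. The paper in fact applies one more Young inequality (essentially $a^s\le 1+a^r$ for $0<s<r$) to make the recursion exactly linear in $\|u\|_{r/4}^r$, which is cleaner still and removes $\rho_k$ from the picture. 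The only genuine bookkeeping is the $r$-dependence of the constants: the paper tracks them explicitly and shows polynomial growth, at worst like $r^{(9N+4)(3N+1)+1}$, so that $\sum_k p_k^{-1}\log C(p_k,T)<\infty$ with $p_k=4^k$ and the iteration closes. One small correction to your last sentence: the smallness hypothesis \eqref{cond} does not enter the Moser iteration itself; it is spent entirely in Lemma~\ref{le} to obtain the $L^m$ bound, and the $L^\infty$ step runs purely off the $\delta$-independent inputs from Lemmas~\ref{le}, \ref{li} and~\ref{dx}.
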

\begin{proof}
In this proof we omit the index $\delta$, and we employ Moser's iteration technique developed in \cite{lpb, time} to show the uniform norm bound for $u$.\\
We multiply the first equation in \eqref{KSd} by $u^{r-1}$, where $r\geq4$, and integrate it over $\Omega$. Then, we have
\begin{eqnarray}
\frac{\mathrm{d}}{\mathrm{d}t}\frac{||u||_r^r}{r}
&=&-\int_\Omega (\nabla (u+\delta)^m-u\ \nabla\varphi)\cdot \nabla u^{r-1}\ \mathrm{d}x\nonumber\\
&=&-m(r-1)\int_\Omega (u+\delta)^{m-1}\ u^{r-2}\ |\nabla u|^2\ \mathrm{d}x+(r-1)\int_\Omega u^{r-1}\ \nabla \varphi\cdot \nabla u\ \mathrm{d}x\nonumber\\
&\leq&-m(r-1)\int_\Omega u^{m+r-3}\ |\nabla u|^2\ \mathrm{d}x+(r-1)\int_\Omega u^{r-1}\ \nabla\varphi\cdot \nabla u\ \mathrm{d}x.\nonumber
\end{eqnarray}
By Young's inequality and Lemma \ref{dx},
\begin{eqnarray}
\frac{1}{r}\frac{\mathrm{d}}{\mathrm{d}t} ||u||^r_{r}&\leq& \frac{-4 m (r-1)}{(r+m-1)^2}\int_\Omega |\nabla u^{\frac{r+m-1}{2}}|^2\ \mathrm{d}x+\frac{2(r-1)\ ||\nabla \varphi||_\infty}{(r+m-1)}\int_\Omega u^{\frac{r-m+1}{2}}\ |\nabla u^{\frac{r+m-1}{2}}|\ \mathrm{d}x\nonumber\\
&\leq& \frac{-2 m (r-1)}{(r+m-1)^2}||\nabla u^{\frac{r+m-1}{2}}||^2_{2}+\frac{r-1}{2m}||\nabla \varphi||^2_\infty \int_\Omega u^{r-m+1}\ \mathrm{d}x\nonumber\\
&\leq &\frac{-2 m (r-1)}{(r+m-1)^2}||\nabla u^{\frac{r+m-1}{2}}||^2_{2}+C(T)\ r\ \int_\Omega u^{r-m+1}\ \mathrm{d}x.\nonumber
\end{eqnarray}
Using H\"older and Young inequalities and Lemma \ref{le} we obtain
\begin{eqnarray}
\frac{1}{r}\frac{\mathrm{d}}{\mathrm{d}t} ||u||^r_r&\leq& \frac{-2 m\ (r-1)}{(r+m-1)^2}||\nabla u^{\frac{r+m-1}{2}}||^2_{2}+r\ C(T)\ ||u||_{1}^{\frac{m-1}{r-1}}\ ||u||_{r}^{\frac{r(r-m)}{r-1}}\nonumber\\
&\leq&\frac{-2 m\  (r-1)}{(r+m-1)^2}||\nabla u^{\frac{r+m-1}{2}}||^2_{2}+C^r+r^2 ||u||^r_{r},\label{dd}
\end{eqnarray}
where we have used that $r^{\frac{r-1}{r-m}}\leq r^2$ for $r\geq 4$.\\
By Lemma \ref{app}, we have for $r\geq4$
\begin{equation}\label{Ga}
||u||^r_{r}\leq C_1^{\frac{2\ r\ \theta}{r+m-1}}\ ||u||_{{\frac{r}{4}}}^{r(1-\theta)}\ ||u^{\frac{r+m-1}{2}}||^{\frac{2\ r\ \theta}{r+m-1}}_{H^1},
\end{equation}
where 
$$\theta=\frac{3\ N (r+m-1)}{(3N+2)r+4N(m-1)}<1.$$
Therefore, Young inequality and \eqref{Ga} yield that
\begin{eqnarray}
2\ r^2\ ||u||^r_{r}&\leq& 2\ r^2\  C_1^{\frac{2\ r\ \theta}{r+m-1}}\ ||u||_{{\frac{r}{4}}}^{r(1-\theta)}||\  u^{\frac{r+m-1}{2}}||^{\frac{2\ r\ \theta}{r+m-1}}_{H^1}\nonumber\\
&\leq& \frac{\theta\ r}{r+m-1}\ \frac{m\ (r-1)}{(r+m-1)^2}\ \frac{r+m-1}{\theta\ r\ C_2(1)}\ ||u^{\frac{r+m-1}{2}}||^2_{H^1}\nonumber\\
&+&\frac{r+m-1-\theta r}{r+m-1}\ \left( C_2(1)\ \frac{\theta (r+m-1)r}{m(r-1)}\right)^{\frac{\theta r}{r(1-\theta)+m-1}}\nonumber\\
&\times& (2\ r^2)^{\frac{(r+m-1)}{r(1-\theta)+m-1}}\ C_1^{\frac{2\ \theta r}{r(1-\theta)+m-1}}\ ||u||_{\frac{r}{4}}^{(1-\theta) r\frac{(r+m-1)}{r(1-\theta)+m-1}},\nonumber
\end{eqnarray}
where $C_2(1)$ is the Poincar\'e constant defined in Lemma \ref{poincare}. Then we obtain
\begin{eqnarray}
2\ r^2\ ||u||^r_r&\leq& \frac{m\ (r-1)}{C_2(1)\ (r+m-1)^2}||u^{\frac{r+m-1}{2}}||^2_{H^1}\nonumber\\
&+& C_1^{\frac{\theta\ r}{r(1-\theta)+m-1}}\ \ 2^{\frac{(r+m-1)}{r(1-\theta)+m-1}}\ r^{\frac{2(r+m-1)+\theta r}{r(1-\theta)+m-1}}\ ||u||_{\frac{r}{4}}^{\frac{(1-\theta)r(r+m-1)}{r(1-\theta)+m-1}}.\nonumber
\end{eqnarray}
Now, since $N>2$, which gives $4N\geq 3N+2$, we find the following upper bound for $\theta$
\begin{equation}\label{theta}\theta\leq\frac{3 N}{3N+2}\end{equation}
In addition,
\begin{equation}
\frac{\theta\ r}{r(1-\theta)+m-1}\leq \frac{\theta}{1-\theta}=-1+\frac{1}{1-\theta}\leq\frac{3N}{2},
\end{equation}
\begin{equation}
\frac{r+m-1}{r(1-\theta)+m-1}\leq \frac{r+m-1}{(1-\theta) (r+m-1)}\leq \frac{1}{1-\theta}\leq \frac{3N+2}{2},\label{nn}
\end{equation}
and
\begin{equation}
\frac{2(r+m-1)+\theta r}{r(1-\theta)+m-1}\leq \frac{2+\theta}{1-\theta}\leq 9N+4.
\end{equation}
As $C_1\geq 1$ and $r\geq1$, we get 
\begin{eqnarray}
2\ r^2\ ||u||^r_r&\leq& \frac{m\ (r-1)}{C_2(1)\ (r+m-1)^2}||u^{\frac{r+m-1}{2}}||^2_{H^1}
+ C\ r^{9N+4}\ ||u||_{\frac{r}{4}}^{\frac{(1-\theta)r(r+m-1)}{r(1-\theta)+m-1}}.\label{cc}
\end{eqnarray}
Using Lemma \ref{poincare} we have
\begin{equation}\label{bb}
||u^{\frac{r+m-1}{2}}||^2_{H^1}\leq C_2(1)\ \left( ||\nabla u^{\frac{r+m-1}{2}}||_2^2+||u^{\frac{r+m-1}{2}}||^2_1\right).
\end{equation}
Using H\"older inequality, Young inequality and Lemma \ref{le}, we get 
\begin{equation*}
||u^{\frac{r+m-1}{2}}||^2_1= ||u||^{r+m-1}_{\frac{r+m-1}{2}}
\leq ||u||_r^{r\ \frac{r+m-3}{r-1}}\ ||u||_1^{\frac{r-m+1}{r-1}}\leq ||u||_r^{r\ \frac{r+m-3}{r-1}}\ ||u_0||_1^{\frac{r-m+1}{r-1}},
\end{equation*}
then,
\begin{eqnarray}
\frac{m\ (r-1)}{(r+m-1)^2}||u^{\frac{r+m-1}{2}}||^2_1&\leq&(r-1)^{\frac{r-1}{r+m-3}}\ \frac{r+m-3}{r-1}\ ||u||_r^r\nonumber\\
&+&\frac{2-m}{r-1}\ \left( \frac{m}{(r+m-1)^2} ||u_0||_1^{\frac{r+m-1}{r-1}}\right)^{\frac{r-1}{2-m}}\nonumber\\
&\leq& r^2 ||u||_r^r+\left(\frac{m}{(r+m-1)^2}\ ||u_0||_1^{\frac{r+m-1}{r-1}}\right)^{\frac{r-1}{2-m}}\nonumber\\
&\leq& r^2||u||^r_r+C_4^r.\label{aa}
\end{eqnarray}
Now substituting \eqref{aa} and \eqref{bb} into \eqref{cc} we get
\begin{eqnarray}
2\ r^2\ ||u||^r_r &\leq&\frac{m\ (r-1)}{(r+m-1)^2}\ \left( ||\nabla u^{\frac{r+m-1}{2}}||_2^2+||u^{\frac{r+m-1}{2}}||^2_1\right)+C\ r^{9N+4}\ ||u||_{\frac{r}{4}}^{\frac{(1-\theta)r(r+m-1)}{r(1-\theta)+m-1}}\nonumber\\
&\leq&\frac{m\ (r-1)}{(r+m-1)^2}\ ||\nabla u^{\frac{r+m-1}{2}}||_2^2+r^2||u||^r_r+ C_4^r+C\ r^{9N+4}\ ||u||_{\frac{r}{4}}^{\frac{(1-\theta)r(r+m-1)}{r(1-\theta)+m-1}},\nonumber 
\end{eqnarray}
hence
$$r^2\ ||u||^r_r\leq \frac{m\ (r-1)}{(r+m-1)^2} \ ||\nabla u^{\frac{r+m-1}{2}}||_2^2+ C_4^r+C\ r^{9N+4}\ ||u||_{\frac{r}{4}}^{\frac{r\ (1-\theta)(r+m-1)}{r(1-\theta)+m-1}}.$$
We apply Young inequality again to the last term of the above inequality. It is easy to see that \begin{equation*}
\frac{2}{3N+2}\leq 1-\theta\leq\frac{(1-\theta)\ (r+m-1)}{r(1-\theta)+m-1}=\frac{(1-\theta)r+(1-\theta)(m-1)}{r(1-\theta)+m-1}<1,
\end{equation*}
so that
\begin{equation}\label{fin}
r^2 ||u||_{r}^r\leq \frac{m\  (r-1)}{(r+m-1)^2}||\nabla u^{\frac{r+m-1}{2}}||^2_{2}+ C_4^r+
1+\left( C\ r^{9N+4}\right )^{3N+1} ||u||^r_{{\frac{r}{4}}},
\end{equation}
for any $r\in [4 , \infty)$.\\

Substituting \eqref{fin} into \eqref{dd} we end up with
\begin{equation}\label{end}
\frac{\mathrm{d}}{\mathrm{dt}} ||u||^r_{r}\leq r\ C_4^r+r+r\ \left( C\ r^{9N+4}\right )^{3N+1} ||u||^r_{{\frac{r}{4}}}\leq C_5^r+C r^\alpha \ ||u||^r_{\frac{r}{4}},
\end{equation}
for any $r\in [4 , \infty)$, where $\alpha= (9N+4)(3N+1)+1$. After integrating \eqref{end} from $0$ to $t$, we obtain the $L^r$ estimate for $u$ as follows:
\begin{equation}\label{lol}
\sup_{0<t<T} ||u(t)||^r_{r}\leq ||u_0||^r_{r}+ T\ C_5^r
+C\ r^\alpha\ T \sup_{0<t<T}||u(t)||^r_{{\frac{r}{4}}}.
\end{equation}
Since $$||u_0||_r\leq ||u_0||^{\frac{r-1}{r}}_\infty\ ||u_0||^{\frac{1}{r}}_1\leq C_6,$$
then
\begin{equation}
\sup_{0<t<T} ||u(t)||^r_{r}\leq C_7(T)\ r^\alpha \max\left\{C_6, \sup_{0<t<T}\ ||u(t)||_{\frac{r}{4}}\right\}^r,
\end{equation}
and we obtain for $r\geq4$
\begin{equation}\label{sup}\sup_{0<t<T} ||u(t)||_{r}\leq C_7(T)^{\frac{1}{r}}\ r^{\frac{\alpha}{r}} \max\left\{C_6, \sup_{0<t<T}\ ||u(t)||_{\frac{r}{4}}\right\}.\end{equation}
We are now in a position to derive the claimed $L^\infty$ estimate. To this end, we set
$$\alpha_p:=\max \left\{ C_6, \sup_{0<t<T}||u(t)||_{{4^p}}\right\}$$ for $p\geq 0$. Then we take $r=4^p$ with $p\geq 0$ in \eqref{sup} which reads
\begin{eqnarray}
\alpha_p&\leq& 4^{\frac{\alpha\ p}{4^p}}\ C_7(T)^{\frac{1}{4^p}}\ \max\left\{C_6, \sup_{0<t<T}||u(t)||_{{4^{p-1}}}\right\},\nonumber\\
&\leq& 4^{\frac{\alpha}{2^p}}\ C_7(T)^{\frac{1}{4^p}}\ \alpha_{p-1}\nonumber
\end{eqnarray}
since $p\leq 2^p$ for $p\geq 1$. Arguing by induction we conclude that 
$$\alpha_p\leq 4^{\alpha \sum_{k=1}^{p} 2^{-k}}\ C_7(T)^{\sum_{k=1}^{p} 4^{-k}}\ \alpha_0.$$
 Then by using Lemma \ref{le} we get
\begin{equation*}
\sup_{0<t<T} ||u(t)||_{{4^p}}\leq 4^{\alpha}\ C_7(T)\ \alpha_0\leq C_8(T).
\end{equation*}
Consequently, by letting $p$ tend to $\infty$, we see that $u\in L^\infty((0,T)\times \Omega)$ and
\begin{equation}
\sup_{0<t<T} ||u(t)||_{\infty}\leq C_8(T).
\end{equation}

Since the right hand side is independent of $\delta$, we have proved the lemma.
\end{proof}
\begin{lemma}\label{est} Let the same assumptions as that in Theorem \ref{ex} hold, and $(u_\delta, \varphi_\delta)$ be the solution to \eqref{KSd}. Then for all $T>0$ there is $C_9(T)$ such that the solution $u_\delta$ satisfies the following derivation estimate
$$\int_0^T ||\partial_t u^m_\delta||_{(W^{1, N+1})'}\ dt\leq C_9(T).$$
\end{lemma}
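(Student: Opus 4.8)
The plan is to exploit the duality characterization
$\|\partial_t u_\delta^m(t)\|_{(W^{1,N+1})'}=\sup\{\langle\partial_t u_\delta^m(t),\psi\rangle:\ \|\psi\|_{W^{1,N+1}}\le 1\}$
together with the equation. Since Theorem \ref{ex} provides a strong solution, I may differentiate classically, $\partial_t u_\delta^m=m\,u_\delta^{m-1}\,\partial_t u_\delta$, and for a fixed test function $\psi\in W^{1,N+1}(\Omega)$ insert the first equation of \eqref{KSd}. Writing $J_\delta:=m(u_\delta+\delta)^{m-1}\nabla u_\delta-u_\delta\nabla\varphi_\delta$ for the flux, the homogeneous Neumann conditions give $J_\delta\cdot\nu=0$ on $\partial\Omega$, so integration by parts produces no boundary term, and expanding $\nabla(u_\delta^{m-1}\psi)=(m-1)u_\delta^{m-2}\psi\,\nabla u_\delta+u_\delta^{m-1}\nabla\psi$ yields (dropping the index $\delta$)
\begin{align*}
\langle\partial_t u^m,\psi\rangle
&=-m^2(m-1)\int_\Omega u^{m-2}(u+\delta)^{m-1}\,|\nabla u|^2\,\psi\,\mathrm{d}x
+m(m-1)\int_\Omega u^{m-1}\,\psi\,\nabla u\cdot\nabla\varphi\,\mathrm{d}x\\
&\quad-m^2\int_\Omega u^{m-1}(u+\delta)^{m-1}\,\nabla u\cdot\nabla\psi\,\mathrm{d}x
+m\int_\Omega u^m\,\nabla\varphi\cdot\nabla\psi\,\mathrm{d}x .
\end{align*}
The whole matter is thus to bound these four integrals by $\|\psi\|_{W^{1,N+1}}$ times a function of time that is integrable on $(0,T)$.

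The two facts driving the estimates are, first, the Sobolev embedding $W^{1,N+1}(\Omega)\hookrightarrow L^\infty(\Omega)$ — valid because $N+1>N$ — which furnishes $\|\psi\|_\infty\le C\,\|\psi\|_{W^{1,N+1}}$ while trivially $\|\nabla\psi\|_{N+1}\le\|\psi\|_{W^{1,N+1}}$; and, second, the uniform bounds $\|u(t)\|_\infty\le C_\infty(T)$ of Lemma \ref{inf} and $\|\nabla\varphi(t)\|_\infty\le L(T)$ of Lemma \ref{dx}. For the first integral I use $\|\psi\|_\infty$ and observe that its spatial integrand is exactly the one controlled, after integration in time, by the weighted gradient estimate \eqref{grad} with the choice $p=m$. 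For the second and third integrals I rewrite $u^{m-1}|\nabla u|=\frac1m|\nabla u^m|$ and bound the remaining powers of $u$ and $u+\delta$ by $C_\infty(T)$; after H\"older's inequality (using that $\Omega$ is bounded and $(N+1)/N<2$) both reduce to $\|\nabla u^m\|_2$, which lies in $L^2(0,T)$ since $\int_0^T\int_\Omega|\nabla u^m|^2\,\mathrm{d}x\,\mathrm{d}t\le m^2\int_0^T\int_\Omega(\delta+u)^{m-1}u^{m-1}|\nabla u|^2\,\mathrm{d}x\,\mathrm{d}t$ is finite by \eqref{grad} with $p=m+1$. The fourth integral is bounded uniformly in $t$ by $C_\infty(T)^m\,L(T)\,|\Omega|^{N/(N+1)}\,\|\nabla\psi\|_{N+1}$.

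Taking the supremum over $\|\psi\|_{W^{1,N+1}}\le 1$ then gives a pointwise-in-time estimate of the form $\|\partial_t u^m(t)\|_{(W^{1,N+1})'}\le C\,(a(t)+b(t)+1)$, with $a\in L^1(0,T)$ coming from the degenerate first term and $b\in L^2(0,T)$ from the middle terms; integrating over $(0,T)$ and applying the Cauchy--Schwarz inequality to the $L^2$ contribution produces the constant $C_9(T)$. The only genuinely delicate point is the first integral: because $m<2$ the factor $u^{m-2}$ is singular at $u=0$ and cannot be absorbed by the $L^\infty$ bound on $u$, so one must recognize that its integrand is precisely the quantity estimated by \eqref{grad} at the borderline exponent $p=m$; everything else is handled by the uniform $L^\infty$ bounds and the $L^2(0,T;H^1)$ bound on $u^m$.
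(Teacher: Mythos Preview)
Your proof is correct and follows essentially the same route as the paper: test $\partial_t u_\delta^m = m\,u_\delta^{m-1}\partial_t u_\delta$ against $\psi\in W^{1,N+1}(\Omega)$, integrate by parts, use the embedding $W^{1,N+1}(\Omega)\hookrightarrow L^\infty(\Omega)$, and control the four resulting terms via the $L^\infty$ bounds of Lemmas~\ref{dx} and~\ref{inf} together with the gradient estimate~\eqref{grad}. The only cosmetic differences are that the paper rewrites the singular term as $\|\nabla u_\delta^{m-1/2}\|_2^2$ rather than recognizing it directly as the integrand of~\eqref{grad} with $p=m$, and it invokes~\eqref{grad} with $p=2$ (plus the $L^\infty$ bound) instead of $p=m+1$ to place $\|\nabla u_\delta^m\|_2$ in $L^2(0,T)$.
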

\begin{proof}
Consider $\psi \in W^{1,N+1}(\Omega)$ and $t\in (0,T)$, we have
\begin{eqnarray*}
&&\left\lvert\int_\Omega m\ u_\delta^{m-1}(t)\ \partial_t u_\delta(t)\ \psi\ dx\right\lvert\\
&=&m\left\lvert\int_\Omega \nabla(u_\delta^{m-1}\ \psi)\cdot (\nabla u_\delta^m-u_\delta\ \nabla \varphi_\delta)\ dx\right\rvert\nonumber\\
&=& m\left\lvert\int_\Omega (u_\delta^{m-1}\ \nabla \psi+\psi\ \nabla u_\delta^{m-1})\cdot (\nabla u_\delta^m-u_\delta\ \nabla \varphi_\delta)\ dx\right\rvert\nonumber\\
&\leq & m\ \int_\Omega \left[ u_\delta^{m-1}\ |\nabla u_\delta^m|\ |\nabla \psi|+ u_\delta^m \ |\nabla \psi|\ |\nabla \varphi_\delta|\right. \nonumber\\
&&+\left.|\psi|\ m(m-1)\ u_\delta^{2m-3}\ |\nabla u_\delta|^2+|\psi|(m-1)u_\delta^{m-1}\ |\nabla u_\delta|\ |\nabla \varphi_\delta|\right]dx\nonumber\\
&\leq & m\ \left[ ||u_\delta||_\infty^{m-1}\ ||\nabla u_\delta^m||_2\ ||\nabla \psi||_2+||\nabla \psi||_2\ ||u_\delta||_\infty^m ||\nabla \varphi_\delta||_\infty |\Omega|^{\frac{1}{2}}\right.\nonumber\\
&&+\left. ||\psi||_\infty\ \frac{4 m(m-1)}{(2m-1)^2} ||\nabla u_\delta^{m- \frac{1}{2}}||_2^2+||\psi||_2 \frac{m-1}{m}||\nabla u_\delta^m||_2\ ||\nabla \varphi_\delta||_\infty \right].\nonumber
\end{eqnarray*}
Using Lemma \ref{app}, Lemma \ref{inf}, and the embedding of $W^{1, N+1}(\Omega)$ in $L^\infty(\Omega)$, we end up with
$$\left\lvert<\partial_tu_\delta^m(t), \psi>\right\lvert\leq C(T) \left( ||\nabla u_\delta(t)^m||_2+||\nabla u_\delta^{m-\frac{1}{2}}(t)||_2^2+1\right)\ ||\psi||_{W^{1,N+1}},$$
and a duality argument gives
$$||\partial_t u_\delta^m(t)||_{(W^{1, N+1})'}\leq C(T) \left( ||\nabla u^m_\delta(t)||_2+||\nabla u_\delta^{m-\frac{1}{2}}(t)||_2^2+1\right).$$ Integrating the above inequality over $(0,T)$ and using Lemma \ref{li} with $p=2$ and $p=m$ give Lemma \ref{est}.
\end{proof}

\section{Proof of Theorem \ref{Ex}}
\subsection{Existence}
In this section, we assume that $u_0$ is a nonnegative function in $L^\infty(\Omega)$ satisfying \eqref{cond}. For $\delta \in (0,1)$, $(u_\delta, \varphi_\delta)$ denotes the solution to $(KS)_\delta$ constructed in Section 3. To prove existence of a weak solution, we use a compactness method. For that purpose, we first study the compactness properties of $(u_\delta, \varphi_\delta)_\delta$.
\begin{lemma} \label{conv}
There are functions $u$ and $\varphi$ and a sequence $(\delta_n)_{n\geq 1}$, $\delta_n\rightarrow 0$, such that, for all $T>0$ and $p\in (1, \infty),$
\begin{equation}
\label{za}
u_{\delta_n}\longrightarrow u, \ \mathrm{in}\ L^p((0,T)\times \Omega)\ \mathrm{as}\ \delta_n\rightarrow 0,\end{equation}
\begin{equation}
\label{zaa}
\varphi_{\delta_n}\longrightarrow \varphi, \ \mathrm{in}\ L^p((0,T); W^{2,p}(\Omega))\ \mathrm{as}\ \delta_n\rightarrow 0.
\end{equation}
In addition, $u\in L^\infty((0,T)\times \Omega)$ for all $T>0$ and is nonnegative.
\end{lemma}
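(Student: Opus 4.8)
The plan is to obtain the two convergences from an Aubin--Lions--Simon compactness argument applied not to $u_\delta$ itself --- whose gradient we do not control, because of the degeneracy --- but to the nonlinear quantity $u_\delta^m$, for which both a spatial and a temporal estimate are at hand. For the spatial estimate I would take $p=m+1$ in the bound \eqref{grad} of Lemma \ref{li}: since $m\geq1$ gives $(\delta+u_\delta)^{m-1}\geq u_\delta^{m-1}$ and $|\nabla u_\delta^m|^2=m^2\,u_\delta^{2(m-1)}|\nabla u_\delta|^2$, this controls $\nabla u_\delta^m$ in $L^2((0,T)\times\Omega)$ uniformly in $\delta$. Together with the uniform $L^\infty$ bound of Lemma \ref{inf}, which bounds $u_\delta^m$ in $L^2((0,T)\times\Omega)$, this shows that $(u_\delta^m)_\delta$ is bounded in $L^2((0,T);H^1(\Omega))$ uniformly in $\delta$. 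The temporal estimate is precisely Lemma \ref{est}, which bounds $(\partial_t u_\delta^m)_\delta$ in $L^1((0,T);(W^{1,N+1})')$.

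Next I would invoke the Aubin--Lions--Simon lemma with the triple $H^1(\Omega)\hookrightarrow\hookrightarrow L^2(\Omega)\hookrightarrow (W^{1,N+1}(\Omega))'$, where the first embedding is compact by Rellich--Kondrachov and the second is continuous (it is the dual of $W^{1,N+1}(\Omega)\hookrightarrow L^\infty(\Omega)\hookrightarrow L^2(\Omega)$). The two bounds above then give relative compactness of $(u_\delta^m)_\delta$ in $L^2((0,T)\times\Omega)$, so along a subsequence $u_{\delta_n}^m\to v$ strongly in $L^2((0,T)\times\Omega)$ and almost everywhere. Performing this on each interval $(0,T_k)$ with $T_k=k\to\infty$ and extracting a diagonal subsequence produces a single sequence $\delta_n\to0$ valid for every $T>0$, with almost-everywhere convergence on $(0,\infty)\times\Omega$. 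Since $v\geq0$ and $s\mapsto s^{1/m}$ is continuous on $[0,\infty)$, the almost-everywhere convergence transfers to $u_{\delta_n}=(u_{\delta_n}^m)^{1/m}\to v^{1/m}=:u$ a.e.; the uniform bound of Lemma \ref{inf} shows $u\in L^\infty((0,T)\times\Omega)$ and $u\geq0$.

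To upgrade the almost-everywhere convergence of $u_{\delta_n}$ to the $L^p$ convergence \eqref{za}, I would combine it with the uniform $L^\infty$ bound: on the bounded set $(0,T)\times\Omega$ one has $|u_{\delta_n}-u|^p\leq (2\,C_\infty(T))^p$, so dominated convergence yields $u_{\delta_n}\to u$ in $L^p((0,T)\times\Omega)$ for every $p\in(1,\infty)$. For the chemoattractant I would define $\varphi$ as the unique solution with $<\varphi>=0$ of $-\Delta\varphi=u-M$ under homogeneous Neumann conditions; subtracting the elliptic equations gives $-\Delta(\varphi_{\delta_n}-\varphi)=u_{\delta_n}-u$, and standard elliptic $W^{2,p}$ regularity yields $\|\varphi_{\delta_n}(t)-\varphi(t)\|_{W^{2,p}(\Omega)}\leq C(p)\,\|u_{\delta_n}(t)-u(t)\|_{L^p(\Omega)}$. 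Raising to the $p$-th power and integrating in time then gives \eqref{zaa} as an immediate consequence of \eqref{za}.

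The main difficulty is the degeneracy of the diffusion, which forbids any uniform bound on $\nabla u_\delta$: compactness cannot be read off from $u_\delta$ directly and must instead be established for $u_\delta^m$ and then transferred back. The two estimates that make this possible --- the spatial gradient bound of Lemma \ref{li} and the time-derivative bound of Lemma \ref{est} --- are deliberately stated for the \emph{same} quantity $u_\delta^m$, which is exactly what lets the Gelfand triple and Aubin--Lions--Simon close; recovering the strong convergence of $u_{\delta_n}$ itself then rests only on the continuity of the inverse power $s\mapsto s^{1/m}$ together with the uniform $L^\infty$ bound.
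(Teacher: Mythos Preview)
Your proof is correct and follows essentially the same route as the paper: Aubin--Lions--Simon compactness is applied to $u_\delta^m$ using the $L^2((0,T);H^1)$ bound from Lemma~\ref{li} (with Lemma~\ref{inf}) and the time-derivative bound from Lemma~\ref{est}, the convergence is transferred from $u_{\delta_n}^m$ back to $u_{\delta_n}$ via the continuous inverse $s\mapsto s^{1/m}$ together with the uniform $L^\infty$ bound, and \eqref{zaa} is deduced from \eqref{za} by elliptic $W^{2,p}$ regularity. The only cosmetic differences are that the paper upgrades the convergence of $u_{\delta_n}$ using the $1/m$-H\"older continuity of $s\mapsto s^{1/m}$ (giving $L^{2m}$ convergence) combined with the weak-$\ast$ $L^\infty$ limit and interpolation, whereas you use a.e.\ convergence plus dominated convergence; and the paper obtains $\varphi$ by showing $(\varphi_{\delta_n})$ is Cauchy rather than first defining $\varphi$ --- both are equivalent.
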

\begin{proof}
Thanks to Lemma \ref{li} and Lemma \ref{inf}, $(u_\delta^m)_\delta$ is bounded in $L^2((0,T); H^1(\Omega))$ while $(\partial_t u_\delta^m)_\delta$ is bounded in $L^1((0,T); (W^{1, N+1})'(\Omega))$ by Lemma \ref {est}. \\
Since $H^1(\Omega)$ is compactly embedded in $L^2(\Omega)$ and $L^2(\Omega)$ is continuously embedded in $(W^{1, N+1})'(\Omega)$, it follows from \cite[corollary 4]{compact} that $(u_\delta^m)$ is compact in $L^2((0,T)\times \Omega)$ for all $T>0$. Since $r\longmapsto r^{\frac{1}{m}}$ is $\frac{1}{m}$-H\"older continuous, it is easy to check that the previous compactness property implies that $(u_\delta)$ is compact in $L^{2m}((0,T)\times \Omega)$ for all $T>0$. There are thus a function $u\in L^{2m}((0,T)\times \Omega)$ for all $T>0$ and a sequence $(\delta_n)_{n\geq1}$ such that
\begin{equation}
\label {Ze}
u_{\delta_n}\longrightarrow u\ \mathrm{in} \ L^{2m}((0,T)\times \Omega)\ \mathrm{as}\ \delta_n\rightarrow 0, 
\end{equation}
for all $T>0$, owing to Lemma \ref{inf}, we may also assume that
\begin{equation}
\label{Zee}
u_{\delta_n}\stackrel{\ast}\rightharpoonup u\ \mathrm{in}\ L^\infty((0,T)\times \Omega)\ \mathrm{as}\ \delta_n\rightarrow 0.
\end{equation}
for all $T>0$. It readily follows from \eqref{Ze} and \eqref{Zee}, and H\"older inequality that \eqref{za} holds true. Since elliptic regularity ensure that
$$||\varphi_{\delta_k}-\varphi_{\delta_n}||_{W^{2,p}}\leq C(p)\ ||u_{\delta_k}-u_{\delta_n}||_p,$$
for all $k\geq 1$, $n\geq 1$, and $p\in (1, \infty)$, a straightforward consequence of \eqref{za} is that $(\varphi_{\delta_n})_{n\geq 1}$ is a Cauchy sequence in $L^p((0,T); W^{2, p}(\Omega))$ and thus converges to some function $\varphi$ in that space. Finally, the nonnegativity of $u$ follows easily from that of $u_{\delta_n}$ by \eqref{za}.
\end{proof}
\begin {proof}[Proof of Theorem \ref{Ex} (existence)]
It remains to identify the equations solved by the limit $(u, \varphi)$ of $(u_{\delta_n}, \varphi_{\delta_n})_{n\geq1}$ constructed in Lemma \ref{conv}. To this end we first note that , owing to \eqref{za} and the boundedness of $(u_{\delta_n})_n$ and $u$ in $L^\infty((0,T)\times \Omega)$, we have 
\begin{equation}
\label{zu}
u^m_{\delta_n}\longrightarrow u^m\ \mathrm{in}\ L^p((0,T)\times \Omega)\ \mathrm{as}\ \delta_n\rightarrow 0,
\end{equation}
for all $T>0$. Since $(\nabla(u_{\delta_n}+\delta_n)^{\frac{m+1}{2}})_{n\geq1}$ and $(\nabla u_{\delta_n}^m)_{n\geq1}$ are bounded in $L^2((0,T)\times \Omega)$ for all $T>0$ by Lemma \ref{li} with $p=2$ and $p=m+1$, we may extract a further subsequence (not relabeled) such that
\begin{equation}
\label{zr}
\nabla(u_{\delta_n}+\delta_n)^{\frac{m+1}{2}}\rightharpoonup \nabla u^{\frac{m+1}{2}}\ \mathrm{in}\ L^2((0,T)\times \Omega),
\end{equation}
\begin{equation}
\label{zrr}
\nabla u_{\delta_n}^m\rightharpoonup \nabla u^m \ \mathrm{in}\ L^2((0,T)\times \Omega),
\end{equation}
for all $T>0$. Then if $\psi\in L^4((0,T)\times \Omega; \mathbb{R}^N)$,\\

$
\begin{array}{l}
\displaystyle
\left\lvert\int_0^T\int_\Omega \psi\cdot \left[\nabla(u_{\delta_n}+\delta_n)^m-\nabla u^m\right]\ dx ds\right\rvert\nonumber\\
\displaystyle=\frac{2}{m+1}\left\lvert \int_0^T\int_\Omega \psi\cdot\left[ (u_{\delta_n}+\delta_n)^{\frac{m-1}{2}}\nabla(u_{\delta_n}+\delta_n)^{\frac{m+1}{2}}-u^{\frac{m-1}{2}}\nabla u^{\frac{m+1}{2}}\right]\ dxds\right\rvert\nonumber\\
\displaystyle\leq \frac{2}{m+1}\left\lvert \int_0^T\int_\Omega \psi\cdot \nabla(u_{\delta_n}+\delta_n)^{\frac{m+1}{2}}((u_{\delta_n}+\delta_n)^{\frac{m-1}{2}}- u^{\frac{m-1}{2}})\ dx ds\right\rvert\nonumber\\
\displaystyle+ \frac{2}{m+1}\left\lvert \int_0^T\int_\Omega u^{\frac{m-1}{2}}\ \psi\cdot \left( \nabla(u_{\delta_n}+\delta_n)^{\frac{m+1}{2}}-\nabla u^{\frac{m+1}{2}}\right)\ dx ds\right\rvert\nonumber\\
\displaystyle\leq\frac{2}{m+1}||\psi||_4\ ||\nabla (u_{\delta_n}+\delta_n)^{\frac{m+1}{2}}||_2\ ||(u_{\delta_n}+\delta_n)^{\frac{m-1}{2}}-u^{\frac{m-1}{2}}||_4\nonumber\\
\displaystyle+  \frac{2}{m+1}\left\lvert \int_0^T\int_\Omega u^{\frac{m-1}{2}}\ \psi\cdot \left( \nabla(u_{\delta_n}+\delta_n)^{\frac{m+1}{2}}-\nabla u^{\frac{m+1}{2}}\right)\ dx ds\right\rvert.\nonumber
\end{array}
$
\\
Since $u^{\frac{m-1}{2}}\ \psi \in L^2((0,T)\times \Omega)$, we deduce from \eqref{za} and \eqref{zr} that the right-hand side of the above inequality converges to zero as $n\longrightarrow \infty$. In other words,
\begin{equation}
\label{zs}
\nabla (u_{\delta_n}+\delta_n)^m\rightharpoonup \nabla u^m\ \  \mathrm{in}\ L^{\frac{4}{3}}((0,T)\times \Omega), 
\end{equation}
for all $T>0$.\\

Now, we are going to show that $(u, \varphi)$ in Lemma \ref{conv} is the desired weak solution in Theorem \ref{Ex}. 
 Let $T>0$ and $\psi\in C^1([0,T]\times \overline{\Omega})$ with $\psi(T)=0$. The solution of \eqref{KSd} satisfies
\begin{equation}
\int_0^T\int_\Omega \left[ \nabla(u_{\delta_n}+\delta_n)^m\cdot \nabla\psi- u_{\delta_n}\ \nabla\varphi_{\delta_n}\cdot \nabla\psi-u_{\delta_n}\ \partial_t \psi\right]\ \mathrm{d}x \mathrm{d}t
=\int_\Omega u_0\ \psi(0, x)\ \mathrm{d}x,
\end{equation}
and,
\begin{equation}
\int_0^T\int_\Omega \left[\nabla \varphi_{\delta_n}\cdot \nabla \psi+M\ \psi-u_{\delta_n}\ \psi\right]\ \mathrm{d}x\mathrm{d}t=0.
\end{equation}

From \eqref{zs} we see that
$$\int_0^T\int_\Omega  \nabla(u_{\delta_n}+\delta_n)^m\cdot \nabla\psi\ \mathrm{d}x\mathrm{d}t\longrightarrow \int_0^T\int_\Omega  \nabla u^m \cdot \nabla\psi\ \mathrm{d}x\mathrm{d}t\ \mathrm{as}\ \delta_n\rightarrow 0.$$
From \eqref{za} we get
$$\int_0^T\int_\Omega u_{\delta_n}\ \partial_t \psi\ \mathrm{d}x \mathrm{d}t\longrightarrow \int_0^T\int_\Omega u\  \partial_t \psi\ \mathrm{d}x\mathrm{d}t \ \mathrm{as}\ \delta_n\rightarrow 0.$$
From \eqref{za} and \eqref{zaa} we get
$$\int_0^T\int_\Omega  u_{\delta_n}\ \nabla \varphi_{\delta_n}\cdot \nabla\psi\ \mathrm{d}x\mathrm{d}t\longrightarrow \int_0^T\int_\Omega u\ \nabla\varphi\cdot\nabla\psi \ \mathrm{d}x\mathrm{d}t\ \mathrm{as}\ \delta_n\rightarrow 0.$$
Thus we conclude that $u$ satisfies
$$\int_0^T\int_\Omega (\nabla u^m\cdot \nabla \psi-u\nabla \varphi\cdot\nabla\psi-u\cdot\partial_t \psi)\ \mathrm{d}x\mathrm{d}t=\int_\Omega u_0(x)\cdot \psi(0, x)\ \mathrm{d}x.$$
Similarly, from \eqref{zaa} we see that
$$\int_0^T\int_\Omega  \nabla\varphi_{\delta_n}\cdot \nabla\psi\ \mathrm{d}x\mathrm{d}t\longrightarrow \int_0^T\int_\Omega  \nabla\varphi\cdot \nabla\psi\ \mathrm{d}x\mathrm{d}t\ \mathrm{as}\ \delta_n\rightarrow 0,$$
and from \eqref{za} we see that
$$\int_0^T\int_\Omega u_{\delta_n}\ \psi\ \mathrm{d}x\mathrm{d}t\longrightarrow \int_0^T\int_\Omega u\  \psi\ \mathrm{d}x\mathrm{d}t\  \mathrm{as}\ \delta_n\rightarrow 0.$$
Thus, we have constructed a weak solution $(u,\varphi)$ of (KS).
\end{proof}
\subsection{Uniqueness}
In this section, we prove the uniqueness statement of Theorem \ref{Ex} under the additionnal assumption \eqref{assu} on $\varphi$. The proof relies on a classical duality technique, and on the method presented in \cite{local}

\begin{proof}
 The proof estimates the difference of weak solutions in dual space $H^{1}(\Omega)'$ of $H^1(\Omega)$, motivated by the fact that the nonlinear diffusion is monotone in this norm.\\
 
 Assume that we have two different weak solutions $(u_1, \varphi_1)$ and $(u_2, \varphi_2)$ to equations \eqref{PKS} corresponding to the same initial conditions, and fix $T>0$. We put
$$(u, \varphi)=(u_1-u_2, \varphi_1-\varphi_2) \ \mathrm{in}\ [0, T]\times \Omega.$$
Then $\varphi$ is the strong solution of
\begin{equation}\label{psi}
\begin{array}{cccr}
-\Delta \varphi&=&u&\  \mathrm{in}\  \Omega,\\
\partial_\nu \varphi&=& 0&\  \mathrm{on}\  \partial \Omega,\\
<\varphi>&=&0.
\end{array}\end{equation}
Since $\partial_t u \in L^2((0,T); H^1(\Omega)')$, we have 
$$-\Delta \partial_t\varphi= \partial_t u_1-\partial_t u_2=\partial_t u\ \ \mathrm{in}\ H^1(\Omega)',$$ 
and 
\begin{eqnarray}
\frac{1}{2}\frac{d}{dt}||\nabla \varphi||_2^2&=&\int_\Omega \nabla \varphi\cdot\nabla \partial_t\varphi\ dx\nonumber\\
&=& -<\Delta \partial_t \varphi, \varphi>_{(H^1)', H^1}=<\partial_t u, \varphi>_{(H^1)', H^1}.\label{norme}
\end{eqnarray}
Now it follows from \eqref{PKS} that $u$ satisfies the equation\\
\begin{equation}\label{fer}
\left\{
\begin{array}{lll}
\partial_t u&=&\mathrm{div}(\nabla(u_1^m-u_2^m))-\mathrm{div}(u_1 \nabla \varphi+u \nabla \varphi_2)\\
\partial_\nu u&=&0\\
u(0,x)&=&0.
\end{array}
\right.
\end{equation}
Substituting \eqref{fer} in \eqref{norme}, we obtain
\begin{eqnarray}
\frac{1}{2}\frac{d}{dt}||\nabla \varphi||_2^2&=&\int_\Omega (u_1^m-u_2^m)\ \Delta \varphi \ \mathrm{d}x+\int_\Omega u_1\ |\nabla\varphi|^2\mathrm{d}x
+\int_\Omega u\ \nabla\varphi_2\cdot \nabla \varphi\ \mathrm{d}x.\label{inte}
\end{eqnarray} 
The first integral on the right-hand side of \eqref{inte} is nonnegative due to the fact that $z\mapsto z^m$ is an increasing function. The second integral on the right-hand  side of \eqref{inte} can be estimated by  
$$\left\lvert\int_\Omega u_1\ |\nabla\varphi|^2\ \mathrm{d}x\right\rvert\leq ||u_1||_{\infty}\ \int_\Omega |\nabla \varphi|^2\ \mathrm{d}x.$$
For the last integral, using an integration by parts we obtain
\begin{eqnarray}
\label{limite}
\int_\Omega u\ \nabla\varphi_2\cdot \nabla \varphi\ \mathrm{d}x&=&-\int_\Omega \Delta \varphi\  \nabla\varphi_2\cdot \nabla \varphi\ dx\nonumber\\
&=& \int_\Omega \nabla \varphi\cdot\nabla(\nabla \varphi_2\cdot\nabla \varphi)\ dx\nonumber\\
&=&\sum_{i,j}\int_\Omega \partial_i\varphi\ \partial^2_{ij}\varphi_2\  \partial_j\varphi\ \mathrm{d}x+\sum_{i,j}\int_\Omega \partial_i\varphi\ \partial_j\varphi_2\ \partial^2_{ij}\varphi\ \mathrm{d}x.\end{eqnarray}
integrating by parts the second integral on the right-hand side of \eqref{limite},
\begin{eqnarray}
\sum_{i,j}\int_\Omega \partial_i\varphi\ \partial_j\varphi_2\ \partial^2_{ij}\varphi\ \mathrm{d}x&=& \sum_{i,j}\frac{1}{2}\int_\Omega \partial_j\varphi_2 \ \partial_{j} |\partial_i\varphi|^2\ \mathrm{d}x\nonumber\\
&=&\frac{1}{2}\int_\Omega \nabla \varphi_2\cdot\nabla(|\nabla\varphi|^2)\ \mathrm{d}x\nonumber\\
&=&-\frac{1}{2}\int_\Omega \Delta \varphi_2\ |\nabla\varphi|^2\ \mathrm{d}x\nonumber\\
&\leq& C(T) \ ||\nabla \varphi||_2^2,\nonumber
\end{eqnarray}
since $-\Delta \varphi_2=u_2-<u_2>\in L^\infty((0,T)\times\Omega)$. Together with \eqref{limite} the previous inequality implies
\begin{eqnarray}
\left\lvert\int_\Omega u\ \nabla\varphi_2\cdot \nabla \varphi\ \mathrm{d}x\right\rvert&\leq& C(T) \int_\Omega (|D^2\varphi_2|+1)\ |\nabla\varphi|^2\ \mathrm{d}x.\nonumber\\
&\leq& C(T)\ (||\varphi_2||_{L^\infty((0,T); W^{2,\infty}(\Omega))}+1)\ \ \int_\Omega |\nabla\varphi|^2\ dx,\nonumber
\end{eqnarray}
provided that the $L^\infty((0,T); W^{2,\infty}(\Omega))$ norm of the function $\varphi_2$ is bounded. 
 Thus, substituting the above estimates in \eqref{inte}, one finally obtains
\begin{equation}\label{estimate} \frac{d}{dt}\int_\Omega|\nabla \varphi|^2\ dx\leq C(T) \int_\Omega |\nabla \varphi|^2\ dx.\end{equation}
 Notice that $||\nabla \varphi(0)||_2=0$ which follows from \eqref{psi} and the property $u(0)=0$. Thus, inequality \eqref{estimate} implies
$$||\nabla \varphi(t)||^2_2\leq e^{C(T)\ t}\ ||\nabla \varphi(0)||_2^2=0.$$
Consequently, $\nabla \varphi(t)=0$ for all $t\in [0,T]$ and, since $<\varphi(t)>=0$, we have $\varphi(t)=0 $ for all $t\in [0,T]$. Using \eqref{psi}, we conclude that $u(t)=0$ for all $t\in [0,T]$. Consequently $(u_1, \varphi_1)=(u_2, \varphi_2)$.
\end{proof}
\section*{ACKNOWLEDGMENT}
I thank professors Philippe Lauren\c{c}ot and Marjolaine Puel for their helpful advices and comments during this work.

\end{document}